\documentclass[12pt]{amsart}
\usepackage[all]{xy}
\usepackage{biblatex}
\usepackage{amssymb}
\usepackage{amsmath}
\usepackage{mathrsfs}
\usepackage{amsfonts}
\usepackage{amsmath,amsfonts,amssymb}
\usepackage{cases}
\usepackage{indentfirst}
\usepackage{ifthen}
\usepackage{amsthm}
\usepackage{graphicx}
\usepackage{epstopdf}
\usepackage{changepage}
\usepackage{geometry}
\usepackage{bbm}
\usepackage{xcolor}
\usepackage{hyperref}
\usepackage{tikz}

\linespread{1.1}

\numberwithin{equation}{section}

\newcommand{\RR}{\mathbb R}
\newcommand{\CC}{\mathbb C}

\newcommand{\ZZ}{\mathbb Z}
\newcommand{\QQ}{\mathbb Q}

\newcommand{\bM}{\mathbf{M}}
\newcommand{\bN}{\mathbf{N}}
\newcommand{\bP}{\mathbf P}
\newcommand{\bG}{\mathbf G}

\newcommand{\bB}{\mathbf B}

\newcommand{\bU}{\mathbf U}
\newcommand{\bA}{\mathbf A}
\newcommand{\bH}{\mathbf H}
\newcommand{\bT}{\mathbf T}

\newcommand{\cO}{\mathcal O}

\newcommand{\cG}{\mathcal G}

\newcommand{\cS}{\mathcal S}

\newcommand{\fa}{\mathfrak a}
\newcommand{\fo}{\mathfrak o}

\newcommand{\fp}{\mathfrak p}
\newcommand{\fw}{\mathfrak w}

\newcommand{\rG}{\mathrm G}

\newcommand{\SL}{\mathrm{SL}}

\newcommand{\GL}{\mathrm{GL}}

\newcommand{\SO}{\mathrm{SO}}

\newcommand{\SU}{\mathrm{SU}}

\newcommand{\lieg}{\mathfrak g}

\newcommand{\liem}{\mathfrak m}
\newcommand{\lien}{\mathfrak n}

\newcommand{\liez}{\mathfrak z}

\newcommand{\Lie}{\mathrm{Lie}}
\newcommand{\Sym}{\mathrm{Sym}}

\newcommand{\Spec}{\mathrm{Spec}}
\newcommand{\Hom}{\mathrm{Hom}}

\newcommand{\Res}{\mathrm{Res}}
\newcommand{\Frob}{\mathrm{Frob}}

\newcommand{\der}{\mathrm{der}}
\newcommand{\Ad}{\mathrm{Ad}}
\newcommand{\ad}{\mathrm{ad}}
\newcommand{\rank}{\mathrm{rank}}
\newcommand{\WD}{\mathrm{WD}}
\newcommand{\W}{\mathrm{W}}
\newcommand{\Sh}{\mathrm{Sh}}
\newcommand{\red}{\mathrm{red}}
\newcommand{\res}{\mathrm{res}}
\newcommand{\ur}{\mathrm{ur}}
\newcommand{\vol}{\mathrm{vol}}
\newcommand{\std}{\mathrm{std}}
\newcommand{\Gal}{\mathrm{Gal}}
\newcommand{\Irr}{\mathrm{Irr}}

\newcommand{\isom}{\simeq}

\topmargin -.7in
\textheight 9.5in
\oddsidemargin 0in
\evensidemargin 0in
\textwidth 6.5in
\newtheorem{theorem}{Theorem}[section]
\newtheorem{proposition}[theorem]{Proposition}
\newtheorem{lemma}[theorem]{Lemma}
\newtheorem{corollary}[theorem]{Corollary}
\newtheorem{conjecture}[theorem]{Conjecture}

\newtheorem{fact}[theorem]{Fact}
\newtheorem*{theorem*}{Theorem}
\newtheorem*{example}{Example}

\newtheorem*{remark}{Remark}


\addbibresource{references.bib}

\begin{document}

\title{Formal Degrees and Parabolic Induction: the Maximal Generic Case}
\author{ Yiyang Wang}
\date{}
\address{
Department of Mathematics, Kyoto University, Kitashirakawa Oiwake-cho, Sakyo-ku, Kyoto 606-8502,
Japan
}
\email{wang.yiyang.67u@st.kyoto-u.ac.jp}
\maketitle
\begin{abstract}
   We study the compatibility of the formal degree conjecture and the parabolic induction process in the simplest nontrivial case for quasi-split $p$-adic groups. For a generic discrete series $\pi$ induced from an irreducible supercuspidal $\sigma$ of a maximal Levi subgroup, we compute the quotient $d(\pi)/d(\sigma)$ of formal degrees with the assumption that the group is unramified. As an application, we verify the conjecture for discrete series of split $\rG_2$ supported on maximal Levi subgroups.
\end{abstract}
\addtocontents{toc}{\setcounter{tocdepth}{1}} 
\tableofcontents

\section{Introduction}
Let $F$ be a non-archimedean local field of characteristic $0$ and residual characteristic $p$. Let $\bG$ be a connected reductive group over $F$ with $\bA$ the split component of its center. Set $G=\bG(F)$ and $A=\bA(F)$.

Let $(\pi,V_\pi)$ be a discrete series (i.e. irreducible unitary, square-integrable modulo $A$) of $G$ with a fixed invariant inner product $(\cdot,\cdot)$ on $V_\pi$. Let $\mu$ be a fixed Haar measure on the quotient group $G/A$. Recall that the formal degree of $\pi$, with respect to the choice of the measure $\mu$, is defined to be the unique positive real number $d(\pi)=d(\pi,\mu)\in\RR_{>0}$ such that
\[\int_{G/A}(\pi(g)v,v')\overline{(\pi(g)w,w')}\mu(\dot{g})=\frac{1}{d(\pi)}(v,w)\overline{(v',w')},\quad v,v',w,w'\in V_\pi.\]

The formal degree of a discrete series of $G$ is exactly its Plancherel measure in Harish-Chandra's Plancherel formula. For real reductive groups, Harish-Chandra exhausted the discrete series and established the explicit Plancherel formula in 1970s. Following Harish-Chandra, Waldspurger completed the proof of the Plancherel formula for $p$-adic groups in \cite{waldspurger-plancherel}, but the Plancherel measure could not be explicitly determined since there's no explicit classification of even supercuspidals for general $p$-adic groups. It was in \cite{hiraga-ichino-ikeda}\cite{hiraga-ichino-ikeda-correction}, assuming the local Langlands correspondence (conjectural for general $\bG$), that an explicit formula for $d(\pi)$ was conjectured in terms of the adjoint $\gamma$-factor. 

More precisely, let $\psi$ be a fixed non-trivial additive character of $F$ and $\mu_\psi(g)$ a specific Haar measure on $G$ depending on $\psi$ (cf. \textsection \ref{section measures}). Assuming the local Langlands correspondence for $\bG$, let $(\varphi_\pi,\rho_\pi)$ be the refined Langlands parameter of $\pi$. Thus $\varphi_\pi:\WD_F\rightarrow {^LG}$ is an admissible homomorphism from the Weil-Deligne group $\WD_F=\W_F\times \SL_2(\CC)$ into the $L$-group ${^LG}$, and $\rho_\pi$ is an irreducible character of a certain component group (cf. \S\ref{subsection the conjecture}). Let $\widehat{G}$ (resp. $\widehat{G}^\natural$) be the dual group of $\bG$ (resp. $\bG/\bA$), so that $\widehat{G}^\natural$ is a subgroup of $\widehat{G}$. Let $S_{\varphi_\pi}^\natural:=Z_{\widehat{G}^\natural}(\varphi_\pi(\WD_F))$, and $\mathcal{S}_{\varphi_\pi}^\natural:=\pi_0(S_{\varphi_\pi}^\natural)$ be the corresponding component group. It was conjectured that 
$$d(\pi)=\frac{\dim \rho_\pi}{|\mathcal{S}^\natural_{\varphi_\pi}|}\cdot |\gamma(0,\pi,\Ad,\psi)|,$$
where $\Ad$ is the adjoint representation of ${^LG}$, and $\gamma(s,\pi,\Ad,\psi)$ the corresponding adjoint $\gamma$-factor.

By general facts in representation theory of $p$-adic groups, every discrete series $\pi$ can be parabolically induced from some irreducible supercuspidal representation of a Levi subgroup. This means
\begin{itemize}
    \item there exists an $F$-parabolic subgroup $\bP$ with Levi decomposition $\bP=\bM\bN$, an irreducible unitary supercuspidal representation $\sigma$ of $M$, and some $\lambda\in \fa^*_{\bM}=X^*(\bM)\otimes \RR$ such that $\pi$ is a subquotient of $i_P^G\sigma_\lambda$ ($\sigma_\lambda:=\sigma\otimes \chi_\lambda$, where $\chi$ is the unramified character of $M$ associated to $\lambda\in \fa^*_{\bM}$, cf. \textsection \ref{section preliminaries}).
\end{itemize} Then in order to reduce the formal degree conjecture to the supercuspidal case, it is natural to compute $\frac{d(\pi)}{d(\sigma)}$ and try to verify 
\begin{equation}\label{formal degree quotient}
    \frac{d(\pi)}{d(\sigma)}=\frac{\dim \rho_\pi}{\dim\rho_\sigma}\cdot \left(\frac{|\mathcal{S}^\natural_{\varphi_\pi}|}{|\mathcal{S}^\natural_{\varphi_\sigma}|}\right)^{-1}\cdot \frac{|\gamma(0,\pi,\Ad,\psi)|}{|\gamma(0,\sigma,\Ad,\psi)|},
\end{equation}
where $d(\sigma),\varphi_\sigma, \rho_\sigma,\mathcal{S}_{\varphi_\sigma}^\natural$ (resp. $d(\pi),\varphi_\pi, \rho_\pi,\mathcal{S}_{\varphi_\pi}^\natural$) are the corresponding objects attached to $\sigma$ (resp. $\pi$).

The main purpose of this paper is to study certain special cases of \eqref{formal degree quotient} using tools from local harmonic analysis. The ideas have already been used to deal with certain examples of classical groups in \cite{gan-ichino-formal-degree}, while there are several technical difficulties for general cases:
\begin{itemize}
\item  First, the (refined) local Langlands correspondence (LLC) is currently inaccessible for general $p$-adic groups.
    \item Secondly, the Langlands-Shahidi method works only for generic representations of quasi-split groups.
    \item Finally, for general (not necessarily maximal) parabolic subgroups, the multiplicity of $\pi$ ``appearing" in the subquotients of $i_P^G\sigma_{\lambda}$ is quite subtle.
\end{itemize} 

In this paper, we shall deal with the simplest nontrivial case. Let $\bG$ be a quasi-split connected reductive group over $F$ with fixed Borel subgroup, and $\bP=\bM\bN$ standard maximal parabolic subgroup with $\Tilde{\alpha}\in \fa_{\bM}^*$ the corresponding fundamental weight (cf. \S\ref{section harmonic analysis} for their precise meanings). 

Let $\pi$ be a generic discrete series representation of $G$ supported on $\bM$, then by Shahidi's classification there exists an irreducible unitary supercuspidal representation $\sigma$ of $M$ and a unique $j\in\{1,2\}$ such that $\pi$ is the unique generic subrepresentation of $i_P^G\sigma_{\Tilde{\alpha}/j}$. 

Finally, to make the final explicit formula less technical, we assume further that $\bG$ is unramified, i.e. splits over a finite unramified extension of $F$. (This condition is  inessential for our approach, but would make the quotient of two different measures simpler, cf. \S \ref{section measures}). With these settings, the main theorem (Theorem \ref{main theorem}) of this paper reads:
\begin{theorem*}
    In the above setting, assuming the (refined) local Langlands correspondence for $\bG$ and $\bM$ (conjectural in general) with some natural assumptions, we have
    \[
    \frac{d(\pi)}{d(\sigma)}=j^{-1}\cdot \frac{m}{\langle \chi,\alpha^\vee\rangle} \cdot  \frac{|\gamma(0,\pi,\Ad,\psi)|}{|\gamma(0,\sigma,\Ad,\psi)|},
    \]
    where 
    \begin{itemize}
        \item $\chi\in X^*(\mathbf{M})^{\mathbf{G}}\isom \ZZ$ is the generator with $\langle \chi,\alpha^\vee\rangle>0$;
        \item $m$ is the index of $\res(X^*(\mathbf{M})^{\mathbf{G}})$ in $X^*(\mathbf{A}_{\mathbf{M}})^{\mathbf{G}}$ (cf. \S\ref{section harmonic analysis} for the notations).
    \end{itemize}
    
    In particular, this result is compatible with the formal degree conjecture if and only if
\[
\frac{|\mathcal{S}^\natural_{\varphi_\pi}|}{|\mathcal{S}^\natural_{\varphi_\sigma}|}=j\cdot \frac{\langle \chi,\alpha^\vee\rangle}{m}.
\]
\end{theorem*}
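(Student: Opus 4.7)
The strategy is to combine three tools: Harish-Chandra's residue formula expressing the formal degree of a non-supercuspidal discrete series as a residue of the Plancherel density attached to $\sigma$; Shahidi's Langlands--Shahidi factorization of this density into $\gamma$-factors of representations of $^LM$; and the decomposition of the adjoint representation of $^LG$ restricted to $^LM$. The first two ingredients yield an identity whose $\gamma$-factor content matches the target expression; the remaining work is then to identify the numerical prefactor as $j^{-1}\cdot m/\langle\chi,\alpha^\vee\rangle$.

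First I would invoke the Silberger--Waldspurger version of the Plancherel formula for the maximal parabolic $\bP$: since $\pi$ embeds uniquely in $i_P^G\sigma_{\tilde\alpha/j}$, one has an identity of the shape
\[
d(\pi)=C_{\mathrm{meas}}\cdot d(\sigma)\cdot\bigl|\Res_{s=1/j}\mu^G(\sigma_{s\tilde\alpha})\bigr|,
\]
with $C_{\mathrm{meas}}$ a Haar-measure constant on $\fa_{\bM}^G$ and $\mu^G$ the Harish-Chandra $\mu$-function; the residue is simple because $\bP$ is maximal. Next I would substitute Shahidi's formula
\[
\mu^G(\sigma_{s\tilde\alpha})\ =\ C'\cdot\prod_{i=1}^{m_r}\gamma(is,\sigma,r_i,\psi)\,\gamma(-is,\tilde\sigma,r_i,\psi),
\]
where $r=\bigoplus_i r_i$ is the decomposition of the adjoint action of $^LM$ on $^L\lien$ graded by the one-parameter subgroup dual to $\tilde\alpha$. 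Evaluating the residue at $s=1/j$ picks up the unique pole coming from the index $i=j$, and the chain rule applied to $\gamma(js,\sigma,r_j,\psi)$ at its pole $s=1/j$ produces exactly the factor $j^{-1}$. Using $\gamma(s)\gamma(1-s)=1$ and the natural LLC-compatibility assumption $\varphi_\pi=\varphi_{\sigma_{\tilde\alpha/j}}$ in $^LG$, the remaining $\gamma$-values rearrange into the product
\[
\prod_i\gamma(i/j,\sigma,r_i,\psi)\,\gamma(-i/j,\tilde\sigma,r_i,\psi)\ =\ \frac{\gamma(0,\pi,\Ad,\psi)}{\gamma(0,\sigma,\Ad,\psi)},
\]
where the last equality uses the decomposition $\Ad|_{^LM}=\Ad_{^LM}\oplus r\oplus r^*$ together with the fact that the $\Ad_{^LM}$-part contributes identically on both sides.

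The main obstacle is the careful bookkeeping of the remaining prefactor $C_{\mathrm{meas}}\cdot C'$ (together with Shahidi's $\gamma(G|M)$-constant), which should collapse to $m/\langle\chi,\alpha^\vee\rangle$. This ratio is expected to arise from comparing two natural normalizations of Haar measure on $\fa_{\bM}^G$: the one induced by the cocharacter lattice of $\bA_\bM/\bA_\bG$, which underlies Harish-Chandra's $\mu$-function and the residue normalization, versus the one induced by the character lattice $X^*(\bM)^\bG=\ZZ\chi$, which governs the specific twist $\sigma_{\tilde\alpha/j}$ via the Harish-Chandra homomorphism. The index $m$ measures the mismatch between these lattices, while the integer $\langle\chi,\alpha^\vee\rangle$ converts $\chi$ into the direction of the fundamental weight $\tilde\alpha$. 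The unramifiedness assumption on $\bG$ is crucial here to discard transfer factors between the $\psi$-normalized Haar measures on $G$ and $M$ from \textsection\ref{section measures}. Making these identifications precise, and tracking them through Shahidi's conventions for $\gamma$-factors and Plancherel constants, is the most technical part of the argument.
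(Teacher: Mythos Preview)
Your overall strategy is the paper's own: Heiermann's residue formula for $\deg(\pi)/\deg(\sigma)$, Shahidi's product formula for $\mu$, the decomposition of $\Ad|_{^L\!M}$ to reassemble the $\gamma$-factors, and the lattice/measure comparison producing $m/\langle\chi,\alpha^\vee\rangle$. Two points in your write-up need correction.

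First, the decomposition $\Ad|_{^L\!M}=\Ad_{^L\!M}\oplus r\oplus r^*$ is missing a piece. The $G$-adjoint is on ${^L\lieg}/Z({^L\lieg})^\Gamma$, while the $M$-adjoint is on ${^L\liem}/Z({^L\liem})^\Gamma$; since $\bP$ is maximal, the quotient $Z({^L\liem})^\Gamma/Z({^L\lieg})^\Gamma$ is a one-dimensional \emph{trivial} ${^L\!M}$-module, so the correct decomposition is $\Ad|_{^L\!M}\cong\mathbbm{1}\oplus\Ad_{^L\!M}\oplus r\oplus\tilde r$. That extra $\mathbbm{1}$ contributes $\gamma(s,\mathbbm{1},\psi)=(1-q^{-s})/(1-q^{s-1})\sim s\cdot\log q/(1-q^{-1})$ as $s\to 0$, so your displayed identity for the $\gamma$-quotient is off by the factor $\log q/(1-q^{-1})$. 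This is not fatal: the same factor appears on the harmonic-analysis side (the $\log q$ in Heiermann's formula times the $(1-q^{-1})^{-1}$ from the measure comparison of \S\ref{section measures}) and the two cancel. But as written your intermediate equation is false, and the constant you are chasing is not just $C_{\mathrm{meas}}\cdot C'$.

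Second, the hypothesis ``$\varphi_\pi=\varphi_{\sigma_{\tilde\alpha/j}}$'' is wrong: that is the parameter of the \emph{non-generic Langlands quotient} of $i_P^G\sigma_{\tilde\alpha/j}$, not of the generic discrete subrepresentation $\pi$. The correct input (made precise in the paper via Heiermann's construction) is that $\varphi_\pi|_{\W_F}=\varphi_\sigma|_{\W_F}$ while $\varphi_\pi|_{\SL_2}$ is a specific nontrivial map with $\varphi_\pi(\mathrm{diag}(q^{1/2},q^{-1/2}))=s_{\sigma,\lambda}$. Your computation survives only because $\gamma$-factors are insensitive to this distinction (pass to the Frobenius-semisimplification $\varphi_\pi'$), but the statement as you wrote it should be fixed. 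Also, the relation ``$\gamma(s)\gamma(1-s)=1$'' is not needed and not quite correct; the paper simply compares the two residues directly, using $|\gamma(s,\sigma,r_i,\bar\psi)|=|\gamma(s,\sigma,r_i,\psi)|$.
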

This also shows how Langlands parameters could reflect information from local harmonic analysis and representation theory. As an application, we apply this to prove the formal degree conjecture for discrete series of split $\mathrm{G}_2$ supported on a maximal Levi (based on the local Langlands for $\mathrm{G}_2$ established recently by Gan-Savin \cite{gan-savin-g2theta}\cite{gan-savin-g2-llc}).

Our computation is a combination of Heiermann's formula for the quotient $d(\pi)/d(\sigma)$ (in terms of the residue of Harish-Chandra's Plancherel density), and the Langlands-Shahidi method. The main difficulty is the explicit determination of some constants, which is somewhat technical but natural in principle. We also remark that for general non-generic representations, besides the invalidity of the Langlands-Shahidi method, the complexity of the behaviour of parameters also serves as an essential obstacle to the explicit computation of the adjoint $\gamma$-factor $\gamma(s,\pi,\Ad,\psi)$ in this approach.

It remains to briefly explain the content of this paper.  
We first recollect some basic facts and fix the notations in \textsection \ref{section preliminaries}. In \textsection\ref{section measures}, we deal with the technical comparison of two different Haar measures. In \textsection \ref{section harmonic analysis} and \textsection\ref{section parameters}, we make necessary preparations in local harmonic analysis, parameters, and compute the adjoint $\gamma$-factors. We then recall the formal degree conjecture and prove the main theorem in \textsection\ref{section the main theorem}. Finally, as an application, we apply this result to discrete series of split $\mathrm{G}_2$ supported on a maximal Levi in \textsection \ref{section G2}.

\section{Preliminaries}\label{section preliminaries}
\subsection{General notations}\label{subsection general notations}
The following is a list of notations that will be used throughout this paper:
\begin{itemize}
    \item $F$ is a non-archimedean local field of characteristic $0$, with the ring of integers $\fo=\fo_F$, its maximal ideal $\fp=\fp_F$, and a fixed uniformizer $\varpi\in \fp$. The order of the residue field $\fo/\fp$ is denoted by $q$. We fix an algebraic closure $\overline{F}$ of $F$.
    \item  Denote by $|\cdot |=|\cdot |_F$ the normalized absolute value of $F$, so that $|\varpi|:=q^{-1}$. We also fix a nontrivial additive (unitary) character $\psi:F\rightarrow \CC^1$. This normalizes the definition of the Fourier transform on $F$. We then equip $F$ with the corresponding self-dual measure, characterized by the Fourier inversion.
    \item $\W_F$ is the Weil group of $F$, with the inertia subgroup $\mathrm{I}_F$, and $\WD_F:=\W_F\times \SL_2(\CC)$ is the Weil-Deligne group of $F$, cf. \S\ref{subsection parameters} for more details.
     \item Given an algebraic group $\bH$ (not necessarily connected) over $F$ or $\CC$, we denote by $\bH^\circ$ its identity component, $\bA_{\bH}$ the split component of its center. Denote by $X^*(\bH)$ the (free abelian) group of $F$-rational characters of $\bH$, and $\mathfrak{a}_{\bH}^*$ (resp. $\mathfrak{a}_{\bH,\CC}^*$) the real (resp. complex) vector space $X^*(\bH)\otimes\RR$ (resp. $X^*(\bH)\otimes\CC$).

    \item  We shall use boldfaced letters to denote algebraic groups (group schemes) and usual letters are used for the group of $F$-points, e.g., $H=\bH(F)$, $A_{\bH}=\bA_{\bH}(F)$.

      \item By ``a parabolic subgroup $\bP=\bM\bN$" of some reductive group, we mean an $F$-parabolic subgroup $\bP$ with a Levi decomposition, $\bN$ its unipotent radical and $\bM$ a Levi subgroup.
      \item All representations for $p$-adic groups considered in this paper are smooth representations over $\CC$.
      \end{itemize}

For a quasi-split connected reductive group $\bG$ with a fixed Borel subgroup $\bB=\bT\bU$, we shall use the following standard facts, notations and terminology.
      \begin{itemize}
    \item   Denote by $\bT_0$ the split component of $\bT$, which is a maximal $F$-split torus of $\bG$, and $X_*(\bT_0)$ the set of cocharacters of $\bT_0$. 
    \item Denote by $\Sigma=\Sigma(\bG,\bT_0)\subset X^*(\bT_0)$ (resp. $\Sigma^\vee=\Sigma^\vee(\bG,\bT_0)\subset X_*(\bT_0)$) the set of relative roots (resp. coroots) of $\bG$ with respect to $\bT_0$, with the correspondence $\alpha\mapsto \alpha^\vee$, $\Sigma\rightarrow \Sigma^\vee$. 
    \item The choice of the Borel subgroup $\bB$ determines a subset $\Sigma^+$ (resp. $\Delta$) of positive roots (resp. positive simple roots) of $\Sigma$. We denote by $\Sigma_{\red}$ the set of reduced roots. 
    \item A parabolic subgroup $\bP=\bM\bN$ of $\bG$ is said to be standard, if $\bP\supset \bB$, $\bM\supset \bT$. In this case we denote by $\bar{\bP}=\bM\bar{\bN}$ its opposite parabolic subgroup, and denote by $\Sigma(\bP)$ (resp. $\Sigma_{\red}(\bP)$) the subset of roots (resp. reduced roots) appearing in the Lie algebra of $\bN$.
    \item The map 
    \[
    \bP\mapsto \Delta-\Sigma(\bP) ,~\{\text{standard parabolic subgroups of }\bG\}\rightarrow \{\text{subsets of }\Delta\}
    \]
    is an inclusion preserving bijection. In particular, a maximal standard parabolic subgroup corresponds to a maximal subset $\theta=\Delta-\{\alpha\}$ for a unique $\alpha\in \Delta$.
\end{itemize}

\subsection{Harmonic analysis}\label{subsection preliminaries harmonic}
In this subsection we recollect some basic notions and facts frequently used in local harmonic analysis. A standard reference is \cite{waldspurger-plancherel}. Let $\bH$ be an arbitrary connected reductive group over $F$ with $\bA_{\bH}$ the split component of its center.
    \begin{itemize}
        \item For any $\chi\in X^*(\bH)$, $s\in \CC$, we denote by $|\chi|^s$ the unramified character
\[
|\chi|^s:H\rightarrow \CC^\times,~h\mapsto |\chi(h)|^s.
\]

\item Let $H^1:=\cap_{\chi\in X^*(\bH)}\ker |\chi|$ and 
\[
X^{\ur}(H):=\Hom(H/H^1,\CC^\times)
\]
be the group of unramified characters of $H$. We thus have a surjection $\mathfrak{a}_{\bH,\CC}^*\twoheadrightarrow X^{\ur}(H)$ characterized by $\chi\otimes s\mapsto |\chi|^s$, which equips $X^{\ur}(H)$ with a canonical complex structure. For $\lambda\in \mathfrak{a}_{\bH,\CC}^*$, we denote by $\chi_\lambda\in X^{\ur}(H)$ its image. 
\item To simplify the notations, given a smooth representation $\sigma$ of $H=\bH(F)$ and $\lambda\in \mathfrak{a}_{\bH,\CC}^*$, we also denote by $\sigma_{\lambda}$ the representation $\sigma\otimes \chi_{\lambda}$.
\item Denote by 
\[
X_0^{\ur}(H):=\{\chi\in X^{\ur}(H)\mid \chi(H)\subset \CC^{1}\}\subset X^{\ur}(H)
\]
the group of unitary unramified characters, i.e. the image of $i\mathfrak{a}_{\bH}^*$ under $\mathfrak{a}_{\bH,\CC}^*\twoheadrightarrow X^{\ur}(H)$.
\item For convenience, an irreducible smooth representation $\pi$ of $H=\bH(F)$ will be called a discrete series (representation) of $H$ if its central character is unitary and $\pi$ is square integrable modulo $A_{\bH}$. (Although for non-semisimple $\bH$ ``essentially square integrable representation" would be a more precise terminology.)

\end{itemize}

Now let $\bG$ be a connected reductive group over $F$ with a fixed maximal $F$-split torus $\bT_0$. Let $\bP=\bM\bN$ be a parabolic subgroup with $\bM\supset \bT_0$. We write $\fa_0^*=\fa_{\bT_0}^*$.
\begin{itemize}
    \item The inclusion $\bA_{\bG}\subset \bG$ induces the restriction morphism
    \[
    \res:X^*(\bG)\rightarrow X^*(\bA_{\bG})
    \]
    and gives an isomorphism $\res\otimes\mathbbm{1}:\mathfrak{a}_{\bG}^*\xrightarrow{\sim} \mathfrak{a}_{\bA_{\bG}}^*$ of vectors spaces after $(\cdot )\otimes \RR$.
    \item The inclusions $\bA_{\bM}\subset \bT_0\subset \bM$ induces 
    \[
    X^*(\bM)\xrightarrow{\res}X^*(\bT_0)\xrightarrow{\res}X^*(\bA_{\bM})
    \]
    and gives a canonical decomposition 
    \[
    \mathfrak{a}_{0}^*=\mathfrak{a}_{\bM}^*\oplus \mathfrak{a}_0^{\bM *}.
    \]
    (Also cf. \cite[\textsection I.1]{waldspurger-plancherel}.) Similarly, $\bA_{\bG}\subset \bA_{\bM}\subset \bM\subset \bG$ induces a canonical decomposition
    \[
   \mathfrak{a}_{\bM}^* =\mathfrak{a}_{\bG}^*\oplus\mathfrak{a}_{\bM}^{\bG *}.
    \]
    It would be convenient to identify $\mathfrak{a}_{\bM}^{\bG *}$ with $X^*(\bA_{\bM}/\bA_{\bG})\otimes \RR$.
    \item Given an irreducible representation $\sigma$ of $M$, denote by
    \[
    \cO=\cO_{\sigma}:=\{\sigma\otimes \chi \mid \chi\in X^{\ur}(M)\}\quad\text{(in the sense of isomorphism classes)}
    \]
    the orbit of $\sigma$. It is equipped with a canonical complex structure through
    \[
    \fa_{\bM,\CC}^*\twoheadrightarrow X^{\ur}(M)\twoheadrightarrow \cO,\quad \lambda\mapsto \chi_{\lambda}\mapsto (\sigma\otimes \chi_{\lambda}).
    \]
    We shall also need a relative version of the orbit, cf. \S\ref{subsection Heiermann's formula}.
\end{itemize}
Finally, for the convenience of the reader, we recall the structure constant $\gamma(\bG/\bM)$. Following the above setting, we need the following objects and notations.
\begin{itemize}
    \item We fix a special maximal compact subgroup $K_G$ of $G$ with $G=PK_G$. For any $g\in G$, fix a choice of $m_{\bP}(g)\in M$, $n_{\bP}(g)\in N$, $k_{\bP}(g)\in K_G$ such that $g=m_{\bP}(g) n_{\bP}(g) k_{\bP}(g)$.
    \item For any closed subgroup $H$ of $G$, we equip $H$ with the Haar measure such that $K_H:=K_G\cap H$ has volume one. This is the standard choice of measures in local harmonic analysis.
    \item Recall $\Bar{\bP}=\bM\Bar{\bN}$ is the opposite parabolic of $\bP$ (with respect to the choice of the maximal split torus $\bT_0$ and a Borel $\bB$). Let $\delta_{\bP}$ be the modulus character of $\bP$.
\end{itemize}    
Then the constant $\gamma(\bG/\bM)$ is defined to be
\[
\gamma(\bG/\bM)=\int_{\Bar{N}}\delta_{\bP}(m_{\bP}(\Bar{n}))d\Bar{n}.
\]
We remark that this $\gamma(\bG/\bM)$ is independent of the choice of $\bP$ containing $\bM$ (cf. \cite[pp. 240-241]{waldspurger-plancherel}), but does depend on the choice of $K_G$ (since the measure does).

\subsection{Parameters and local factors}\label{subsection parameters}
We recollect in this subsection some basic facts about Langlands parameters and local factors. For our purposes, stating them for quasi-split groups would suffice. Let $\bG$ be a quasi-split connected reductive group over $F$ with a fixed Borel subgroup $\bB=\bT\bU$.
\begin{itemize}
    \item Denote by $\widehat{G}$ the dual group of $\bG$. Recall that $\widehat{G}$ is then a connected reductive complex Lie group, whose root datum is the dual of the absolute root datum of $\bG$.
    \item After fixing a pinning of the based (absolute) root datum, $\Gamma:=\Gal(\overline{F}/F)$ acts on $\widehat{G}$ via ``pinned automorphisms". The L-group of $\bG$ is 
    \[
    {^LG}:=\widehat{G}\rtimes \Gamma.
    \]
    If $K/F$ is a finite Galois extension over which $G$ splits, then the action of $\Gamma$ factors through $\Gamma_{K/F}:=\Gal(K/F)$, and ${^LG}$ is essentially $\widehat{G}\rtimes \Gamma_{K/F}$.
    \item 
   We fix a Borel subgroup $\widehat{B}$ of $\widehat{G}$ corresponding to the based root datum. A parabolic subgroup ${^L\!P}$ of ${^LG}$ is the normalizer $N_{{^LG}}(\widehat{P})$ in ${^LG}$ of some parabolic $\widehat{P}=\widehat{M}\widehat{N}$ of $\widehat{G}$, with unipotent radical ${^L\!N}=\widehat{N}$ and Levi ${^L\!M}=N_{^L\!P}(\widehat{M})$. A parabolic ${^L\!P}$ of ${^LG}$ is said to be standard if ${^L\!P}\supset{^L\!B}$, which must be of the form ${^L\!P}=\widehat{P}\rtimes \Gamma$, $\widehat{P}\supset \widehat{B}$ corresponding to a $\Gamma$-invariant subset of positive simple roots, with ${^L\!M}=\widehat{M}\rtimes \Gamma$.  
   \item A standard parabolic subgroup $\bP=\bM\bN$ of $\bG$ then corresponds to a standard parabolic ${^L\!P}$ of ${^LG}$, which is relevant since $\bG$ is assumed to be quasi-split (\cite[\S 3]{borel-automorphic-L-function}). We could then identify the L-group of $\bM$ with ${^L\!M}$. For more details, cf. \cite{borel-automorphic-L-function}. We denote by $\widehat{\mathfrak{m}}$ (resp. $\widehat{\mathfrak{n}}$) the Lie algebra of ${\widehat{M}}$ (resp. ${\widehat{N}}$).
    \item An L-parameter of $\bG$ is a homomorphism $\varphi:\WD_F\rightarrow {^LG}$ such that
    \begin{itemize}
        \item $\varphi:\SL_2(\CC)\rightarrow \widehat{G}$ is algebraic;
        \item $\varphi$ is continuous on $\mathrm{I}_F$ and $\varphi(\Frob)\in \widehat{G}$ is semisimple;
        \item the composite $\W_F\xrightarrow{\varphi}{^LG}\twoheadrightarrow \Gamma$ is the natural inclusion $\W_F\hookrightarrow \Gamma$.
    \end{itemize}
    \item  Denote by $\Phi(\bG)$ (resp. $\Pi(\bG)$) the set of $\widehat{G}$-conjugacy classes of L-parameters of $\bG$ (resp. isomorphism classes of irreducible smooth representations of $G=\bG(F)$).
\end{itemize}

The local Langlands correspondence (abbreviated as ``LLC" from now on) for $\bG$, which is conjectural in general, is a finite-to-one surjection 
\[
\mathrm{LL}:\Pi(\bG)\twoheadrightarrow \Phi(\bG),\quad \pi\mapsto \varphi_{\pi},
\]
satisfying a series of properties. For any $\pi\in \Pi(\bG)$ (resp. $\varphi\in \Phi(\bG)$), $\mathrm{LL}(\pi):=\varphi_{\pi}$ (resp. $\mathrm{LL}^{-1}(\varphi):=\Pi_{\varphi}$) is called the L-parameter of $\pi$ (resp. the L-packet of $\varphi$).

Recall that we have fixed a nontrivial unitary character $\psi:F\rightarrow \CC^1$. Assuming LLC and given a finite dimensional representation $r$ of the complex Lie group ${^LG}$, we denote by
\[
L(s,\pi,r),~\varepsilon(s,\pi,r,\psi),~\gamma(s,\pi,r,\psi)
\]
the local $L,\varepsilon,\gamma$-factors of $\pi\in \Pi(\bG)$. They are those local factors of $r\circ \varphi$ as a representation of the Weil-Deligne group.

Finally, we fix a geometric Frobenius $\Frob\in \W_F$ and a reciprocity homomorphism
\[
\mathrm{rec}:\W_F\twoheadrightarrow F^\times
\]
by the local class field theory, which maps $\Frob$ to the uniformizer $\varpi$. We then write
\[
\|w\|:=|{\rm rec}(w)|_F,~v(w):=-\log_q\|w\|_F\quad (w\in \W_F).
\]

The choice of ${\rm rec}$ normalizes the LLC for split tori: for $\bG=\GL_1$, a character $\chi:F^\times \rightarrow \CC^\times$ corresponds to the parameter $\chi\circ \mathrm{rec}:\W_F\rightarrow \CC^\times$. For example, the unramified character $\chi=|\cdot |^{\lambda}~(\lambda\in \CC)$ corresponds to the unramified character $\W_F\rightarrow \CC^\times$ which maps $\Frob$ to $q^{-\lambda}$. We remark that in some literature such as \cite[\textsection 2.5]{heiermann-orbites}, the authors use a normalization which gives $q^{\lambda}$, not $q^{-\lambda}$.

\section{Haar Measures}\label{section measures}
Fix a Haar measure $\mu$ on $G/A$. Let $(\pi,V_\pi)$ be a discrete series of $G$ and $(\cdot,\cdot)$ a $G$-invariant inner product on $V_\pi$. Recall that by the Schur orthogonality relation of matrix coefficients, there exists a unique positive real number $d(\pi)=d(\pi,\mu)\in\RR^\times_{>0}$, called the formal degree of $\pi$ with respect to $\mu$, such that
\[\int_{G/A}(\pi(g)v,v')\overline{(\pi(g)w,w')}\mu(g)=\frac{1}{d(\pi)}(v,w)\overline{(v',w')},\quad v,v',w,w'\in V_\pi.\]
Clearly, $d(\pi)=d(\pi,\mu)$ is independent of the choice of $(\cdot,\cdot)$ but depends on the choice of $\mu$, and $d(\pi,\lambda\mu)=\lambda^{-1}\cdot d(\pi,\mu)$ for $\lambda\in \RR^\times_{>0}$.

We fix a special maximal compact subgroup $K_G\subset G=\bG(F)$, and for any closed subgroup $H$ of $G$, we write $K_{H}:=K_G\cap H$. In local harmonic analysis, one chooses the (left) Haar measure on any closed subgroup $H$ of $G$ such that $K_H$ has volume one. Thus if $H$ is a unimodular subgroup of $G$, $G/H$ is equipped with the left $G$-invariant quotient measure $d\dot g$, characterized by
$$\int_{G/H}\int_H f(gh)dh d\dot g=\int_G f(g)dg,\quad f\in C_c^\infty(G).$$
For $H=A_{\bG}$, we denote this quotient Haar measure on $G/A_{\bG}$ by $\mu_{1,\bG}$.

On the other hand, the formal degree conjecture uses another Haar measure specified as follows. We have fixed a non-trivial additive character $\psi$ of $F$. Let $\mathcal{G}$ be a split form of $\mathbf{G}/\mathbf{A}_{\bG}$ defined over $\fo=\fo_F$. Choose an $\overline{F}$-isomorphism
$$\eta: \mathbf{G}/\mathbf{A}_{\bG}\xrightarrow{\sim} \cG$$
and an invariant top form $\omega_{\cG}$ on $\cG$ defined over $\fo$ (which is equivalent to choosing a top form over $\fo$ on the Lie algebra of $\cG$), with nonzero reduction (modulo $\varpi=\varpi_F$). Clearly, $\omega_{\cG}$ is unique up to a scalar in $\fo^\times$. Set $\omega_{\bG}:=\eta^{*}(\omega_{\cG})$ and let $dg:=dg_\psi$ be the Haar measure on $G/A_{\bG}$ determined by $\omega_{\bG}$ and the self-dual measure on $F$ with respect to $\psi$ (cf. \S\ref{subsection general notations}). Then $dg$ depends only on $\psi$ and is independent of the choice of $\eta$ and $\omega_{\cG}$ (cf. \cite{gross-motive}\cite{gan-gross-haar-measure}). We denote this Haar measure on $G/A_{\bG}$ by $\mu_{2,\bG}$.

Now let $\bP=\bM\bN$ be a parabolic subgroup of $\bG$. The measures $\mu_{1,\bM}$, $\mu_{2,\bM}$ (on $M/A_{\bM}$) are defined similarly. Denote by $\deg(\cdot)$ (resp. $d(\cdot)$) the formal degree of a discrete series representation (of either $\bM$ or $\bG$) with respect to the Haar measure $\mu_1$ (resp. $\mu_2$). For convenience, we denote by $\frac{\mu_{1,\bG}}{\mu_{2,\bG}}$ (resp. $\frac{\mu_{1,\bM}}{\mu_{2,\bM}}$) the unique positive real number $\lambda$ such that $\mu_{1,\bG}=\lambda\mu_{2,\bG}$ (resp. $\mu_{1,\bM}=\lambda\mu_{2,\bM}$).

Let $\pi$ (resp. $\sigma$) be a discrete series representation of $G$ (resp. $M$). In order to use tools from harmonic analysis to deal with our problem, we have to first evaluate the quotient
\[
\left(\frac{\deg(\pi)}{\deg(\sigma)}\right)\cdot \left(\frac{d(\pi)}{d(\sigma)}\right)^{-1} =
\left(\frac{\mu_{1,\bG}}{\mu_{2,\bG}}\right)^{-1} \left(\frac{\mu_{1,\bM}}{\mu_{2,\bM}}\right).
\]
Note that this quotient is independent of the choice of $\psi$, so we may assume that the self-dual measure on $F$ associated to $\psi$ is the standard additive measure, namely $\vol(\fo)=1$. 

By definition, $\mu_{1,\bG}(K_G/K_G\cap A_{\bG})=1$. To deal with $\mu_{2,\bG}$, we fix a split form $\mathscr{G}$ of $\bG$ over $\fo$ and $\eta_{\bG}:\bG\xrightarrow{\sim} \mathscr{G}$ an $\overline{F}$-isomorphism. Let $\lieg$ be the Lie algebra of $\mathscr{G}$ with decomposition 
$$\lieg=\liez\oplus\lieg_{\der},$$
where $\liez$ (resp. $\lieg_{\der}$) is the center (resp. the derived subalgebra) of $\lieg$, both defined over $\fo$. Let $\omega_1$ (resp. $\omega_2$) be a top form on $\liez$ (resp. $\lieg_{\der}$) over $\fo$ with nonzero reduction. Then $\omega:=\omega_1\wedge \omega_2$ is a top form over $\fo$ on $\lieg$ with nonzero reduction. Let $\nu_{\bG}$ be the Haar measure on $G$ defined by $\omega$ and $\psi$.

An invariant top form on $\mathbb{G}_m=\Spec ~\fo [x^{\pm 1}]$ with nonzero reduction is given by $\frac{dx}{x}$, and
$$\int_{\fo^\times} \frac{dx}{|x|}=dx(\fo-\varpi \fo)=(1-q^{-1}).$$
Since $\bA_{\bG}\isom (\mathbb{G}_m)^{\dim \bA_{\bG}}$ over $F$, $\mu_{2,\bG}$ is then characterised by
$$\mu_{2,\bG}(K_G/K_G\cap A_{\bG})=\nu_{\bG}(K_G)\cdot (1-q^{-1})^{-\dim \bA_{\bG}}.$$
Similarly, we have $\mu_{1,\bM}(K_M/K_M\cap A_{\bM})=1$ and
$$\mu_{2,\bM}(K_M/K_M\cap A_{\bM})=\nu_{\bM}(K_M)\cdot (1-q^{-1})^{-\dim \bA_{\bM}},$$
where $\nu_{\bM}$ is the Haar measure defined in the same way as above for $M$. Thus
\[
\left(\frac{\mu_{1,\bG}}{\mu_{2,\bG}}\right)^{-1} \left(\frac{\mu_{1,\bM}}{\mu_{2,\bM}}\right)=\frac{\nu_{\bG}(K_G)}{\nu_{\bM}(K_M)} \cdot (1-q^{-1})^{\dim \bA_{\bM}-\dim \bA_{\bG}}.
\]

\begin{proposition}\label{measure quotient}Assume that $\bG$ is unramified (i.e. $\bG$ is quasi-split and splits over a finite unramified extension over $F$) and we take $K_G$ to be hyperspecial. Then
  \[
  \frac{\nu_{\bG}(K_G)}{\nu_{\bM}(K_M)}=\gamma(\mathbf{G}/\mathbf{M}),
  \] 
  and 
  \[
  \left(\frac{\deg(\pi)}{\deg(\sigma)}\right)\cdot \left(\frac{d(\pi)}{d(\sigma)}\right)^{-1}=\gamma(\mathbf{G}/\mathbf{M})\cdot (1-q^{-1})^{\dim \bA_{\bM}-\dim \bA_{\bG}}.
  \]
\end{proposition}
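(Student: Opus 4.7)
The second equality follows from the first by substitution into the calculation preceding the proposition, so the heart of the matter is the identity $\nu_\bG(K_G)/\nu_\bM(K_M) = \gamma(\bG/\bM)$. The plan is to compute $\nu_\bG(K_G)$ by integrating the characteristic function of $K_G$ over the open Bruhat cell of $\bG$ and comparing with the definition of $\gamma(\bG/\bM)$.

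The first step would be to establish the factorisation of the algebraic Haar measure on the open Bruhat cell $\bar\bN\cdot\bM\cdot\bN\hookrightarrow \bG$:
\[
d\nu_\bG = \delta_\bP(m)\,d\nu_{\bar\bN}\otimes d\nu_\bM\otimes d\nu_\bN = d\nu_{\bar\bN}\otimes d_r p,
\]
where $d_r p$ denotes the right Haar on $P$ (with $d_\ell p = dm\,dn$ and $d_r p = \delta_\bP(m)\,dm\,dn$ on $P = MN$). The Jacobian $\delta_\bP(m)$ reflects the adjoint action of $M$ on $\bar\lien$; one verifies it by checking compatibility of the formula with left $\bar N$-translation, right $N$-translation, and left $M$-translation of $d\nu_\bG$ (the last producing the weight from $\det \Ad(m^{-1})|_{\bar\lien}$). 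Since the complement of the open cell has measure zero, this gives
\[
\nu_\bG(K_G) = \int_{\bar N}\int_P \mathbbm{1}_{K_G}(\bar n p)\,d_r p\,d\bar n.
\]

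Next, for each fixed $\bar n$, I would analyse the inner integral. Using the Iwasawa decomposition $\bar n^{-1} = p_\bP(\bar n^{-1})k_\bP(\bar n^{-1})$ and the equality $P\cap K_G = K_P$, one checks that the set $\{p\in P : \bar n p\in K_G\}$ is the single left coset $p_\bP(\bar n^{-1})K_P$ in $P$. Combining the scaling $d_r(pA) = \delta_\bP(p)\,d_r(A)$ for $p\in P$ with the computation
\[
d_r(K_P) = \int_{K_M}\int_{K_N}\delta_\bP(m)\,dm\,dn = \nu_\bM(K_M)
\]
(using $\delta_\bP\equiv 1$ on the compact $K_M$, and $\nu_\bN(K_N) = 1$ because $\bN$ is unipotent with $|\bN(\mathbb F_q)| = q^{\dim \bN}$), the inner integral evaluates to $\delta_\bP(m_\bP(\bar n^{-1}))\nu_\bM(K_M)$. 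Finally, the inversion $\bar n\mapsto \bar n^{-1}$ is measure-preserving on the unimodular $\bar N$, so substituting and unfolding the definition of $\gamma(\bG/\bM)$ yields the asserted identity.

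The main technical hurdle is the Jacobian computation in the first step, i.e.\ verifying the factor $\delta_\bP(m)$ in the open-cell decomposition of $d\nu_\bG$; an alternative slicker route avoiding the direct Jacobian computation would be to combine Weil's point-counting formula $\nu_\bG(K_G) = q^{-\dim\bG}|\bG(\mathbb F_q)|$ with the classical geometric identity $\gamma(\bG/\bM) = q^{-\dim\bN}|(\bG/\bP)(\mathbb F_q)|$, but either way the remaining steps after that input are routine manipulations of the Iwasawa decomposition and the right Haar on $P$.
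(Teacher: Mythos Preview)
Your argument is correct and takes a genuinely different route from the paper's. The paper never integrates over the big cell: instead it writes $\nu_{\bG}(K_G)=[K_G:I_G^+]\cdot\nu_{\bG}(I_G^+)$, evaluates $\nu_{\bG}(I_G^+)$ via the Gan--Gross volume formula expressed through the motive $\mathfrak{M}_{\bG}$ (which in the unramified case collapses to a power of $q$ depending only on $\dim\bG$ and $\dim\bT$), does the same for $\bM$, and then on the other side quotes Waldspurger's index formula
\[
\gamma(\bG/\bM)=\frac{[K_G:I_G^+]}{[K_M:I_M^+]}\prod_{\alpha\in\Sigma_{\mathrm{red}}(\bar\bP)}[U_\alpha\cap K_G:U_\alpha\cap I_G^+]^{-1},
\]
checking via the description $\bU_\alpha\cong\Res_{F_{\tilde\alpha}/F}\bU_{\tilde\alpha}$ (with $F_{\tilde\alpha}/F$ unramified) that the root-subgroup product contributes exactly $q^{-\dim\bN}$. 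Your open-cell integration is more self-contained and bypasses both of these black boxes; what it buys is a direct link between $\nu_{\bG}(K_G)$ and the very integral defining $\gamma(\bG/\bM)$. Two small points to tighten: the translation-compatibility you sketch pins down $d\nu_{\bG}=d\nu_{\bar\bN}\otimes d_r p$ only up to a positive scalar---to get the constant equal to $1$ you need the unramified $\fo$-model of $\bG$, so that $\lieg=\bar\lien\oplus\liem\oplus\lien$ holds over $\fo$ and the wedge of integral top forms on the summands again has nonzero reduction; and you should also record $\nu_{\bar\bN}(K_{\bar N})=1$ (same unipotent argument as for $\bN$) so that your final $\bar N$-integral is literally $\gamma(\bG/\bM)$ in the paper's normalisation. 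Your point-counting alternative is the route closest in spirit to the paper's argument, and as you note it is the slickest once Weil's formula and the geometric expression for $\gamma(\bG/\bM)$ are in hand.
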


\hspace{\fill}\\

First we recall some basic facts about the motives associated to reductive groups introduced in \cite{gross-motive}. 

Let $\bH$ be a quasi-split connected reductive group over $F$, with a fixed maximal $F$-torus $\bT$. Let $W$ be the absolute Weyl group of $(\bH,\bT)$ and $E:=X^*(\bT)\otimes \QQ$. Since the Galois group $\Gamma=\Gal(\overline{F}/F)$ acts on $W$ and the (absolute) root datum of $\bH$, $E$ becomes a representation of $W\rtimes \Gamma$ over $\QQ$.

Let $R:=\Sym^*(E)^W$ and $R_+$ the ideal of elements of degree $\geq 1$ in $R$. Define the graded vector space
\[
V=R_+/R_+^2=\bigoplus _{d\geq 1} V_d
\]
with the induced grading, where $V_d$ is the summand of primitive invariants of degree $d$. Then each $V_d$ is a representation of $\Gamma$, and by results of Chevalley and Steinberg, $V$ is isomorphic to $E$ as representations of $\Gamma$ over $\QQ$, so that $\dim V=\dim \bT$. The motive $\mathfrak{M}_{\bH}$ of $\bH$ is defined to be
\[\mathfrak{M}_{\bH}=\bigoplus_{d\geq 1} V_d(1-d)
\]
where $V_d(k)$ is the $k$-Tate twist of $V_d$, and one has
\[
\dim \bH=\sum_{d\geq 1}(2d-1)\dim V_d.
\]

\begin{proof}[Proof of Proposition 3.1] 
Since $\bG$ is unramified, there exists a connected reductive group scheme $\mathscr{G}$ over $\fo$ such that $\bG$ is the generic fibre of $\mathscr{G}$ and $K_G:=\mathscr{G}(\fo)$ is a hyperspecial maximal compact subgroup of $G=\bG(F)$. In this case, it is also known that indeed $\mathscr{G}$ is also quasi-split over $\fo$ (cf. \cite[\textsection5.2.12]{conrad-reductive-group-schemes}).

So we may fix a Borel subgroup $\bB$ of $\bG$ with its Levi decomposition $\bB=\bT\bU$, such that $\bB,\bT,\bU$ are defined over $\fo$. Let $\bT_0$ be the maximal $F$-split subtorus of $\bT$. Given $\alpha\in \Sigma_{\red}(\bG,\bT_0)$, we denote by $\bU_\alpha$ its (relative) root subgroup. This is a closed subgroup normalized by $\bT_0$, and $\Lie(\bU_{\alpha})$ is the direct sum of the root spaces of $\{\alpha,2\alpha\}\cap \Sigma(\bG,\bT_0)$.  Then the subgroup $I_G$ of $G$ generated by
\[
 \bT(\fo),~\bU_{\alpha}(\fo)~(\alpha\in\Sigma_{\red}^+(\bG,\bT_0)),~\bU_{\alpha}(\fp)~(\alpha\in\Sigma_{\red}^-(\bG,\bT_0))
\]
is an Iwahori subgroup of $G$ with pro-unipotent radical $I_G^+$ generated by
\[
 \bT(1+\fp),~\bU_{\alpha}(\fo)~(\alpha\in\Sigma_{\red}^+(\bG,\bT_0)),~\bU_{\alpha}(\fp)~(\alpha\in\Sigma_{\red}^-(\bG,\bT_0)).
\]
We define $I_M,I_M^+ $ similarly for $\bM$, and $K_M:=K_G\cap M$.

Now let $\mathfrak{M}_{\bG}=\bigoplus_{d\geq 1}V_d(1-d)$ be the motive of $\bG$. We have
\[
\nu_{\bG}(I_G^+)=q^{-\mathfrak{v}_{\bG}},\quad\mathfrak{v}_{\bG}:=\frac{1}{2} a(\mathfrak{M}_{\bG}) +\sum_{d\geq 1} (d-1)\dim V_d^{I_F} + \rank_{F^{\ur}}\bG,
\]
where $a(\mathfrak{M}_{\bG})$ is the Artin conductor of $\mathfrak{M}_{\bG}$,  $I_F$ is the inertia group of $F$ and $F^{\ur}$ the maximal unramified extension of $F$ in $\overline{F}$ (cf. \cite[pp. 295]{gross-motive} and \cite[\S5]{gan-gross-haar-measure}). Since $\bG$ is unramified, 
\[
a(\mathfrak{M}_{\bG})=0,\quad V_d^{I_F}=V_d,\quad \rank_{F^{\ur}}\bG=\dim \bT.
\]
Since
\[
\dim \bG=\sum_{d\geq 1} (2d-1)\dim V_d,\quad\dim \bT=\sum_{d\geq 1} \dim V_d ,
\]
we get
\[
\nu_{\bG}(I_G^+)=q^{-\frac{1}{2}(\dim \bG+\dim \bT)}.
\]
Similarly, 
\[
\nu_{\bM}(I_M^+)=q^{-\frac{1}{2}(\dim \bM+\dim \bT)},
\]
so that
\begin{align*}
     \frac{\nu_{\bG}(K_G)}{\nu_{\bM}(K_M)}&=\frac{[K_G:I_G^+]}{[K_M:I_M^+]}\cdot \frac{\nu_{\bG}(I_{G}^+)}{\nu_{\bM}(I_{M}^+)}\\
     &=\frac{[K_G:I_G^+]}{[K_M:I_M^+]}\cdot q^{-\frac{1}{2}(\dim \bG-\dim \bM)}=\frac{[K_G:I_G^+]}{[K_M:I_M^+]}\cdot q^{-\dim \bN}.
\end{align*}

Next for any $\alpha\in \Sigma_{\red}(\bG,\bT_0)$, we have 

\begin{fact}
    $[\bU_\alpha(\fo):\bU_\alpha(\fp)]=q^{\dim \bU_\alpha}$.
\end{fact}

Let us just remark here that this follows directly from the structure of the relative root subgroups $\bU_{\alpha}$ and the unramified condition. We shall write down the related details as a proof later.

With this fact, for $\alpha\in \Sigma_{\red}^-(\bG,\bT_0)$ we have
\[
[U_\alpha\cap K_G:U_\alpha\cap I_G^+]=[\bU_\alpha(\fo):\bU_{\alpha}(\fp)]=q^{\dim \bU_\alpha}.
\]
Finally, for our choice of $K_G$, $I_G$, $I_G^+$,
\[\gamma(\mathbf{G}/\mathbf{M})=\frac{[K_G:I_G^+]}{[K_M:I_M^+]}\underset{\alpha\in \Sigma_{\red}(\Bar{\bP})}{\prod}[U_\alpha\cap K_G:U_\alpha\cap I_G^+]^{-1}\]
(cf. \cite[pp. 241]{waldspurger-plancherel}). Since $\Lie(\overline{\bN})=\bigoplus_{\alpha\in \Sigma_{\red}(\overline{\bP})}\Lie (\bU_{\alpha})$ and $\sum_{\alpha\in \Sigma_{\red}(\overline{\bP})}\dim \bU_{\alpha}=\dim \overline{\bN}=\dim \bN$, we get 
\[
\gamma(\mathbf{G}/\mathbf{M})=\frac{[K_G:I_G^+]}{[K_M:I_M^+]}\cdot q^{-\dim \bN}=\frac{\nu_{\bG}(K_G)}{\nu_{\bM}(K_M)}.
\]
\end{proof}

Now we recollect the related details of the above fact. The reader is referred to \cite[pp. 96-97]{kaletha} for an excellent and clear explanation of the structure of relative root subgroups of quasi-split groups. With this the fact is nothing more than a direct consequence of the unramified condition.

\begin{proof}[Proof of Fact 3.2]
   Let $\alpha\in \Sigma_{\red}(\bG,\bT_0)$. There are two cases. We take a preimage $\Tilde{\alpha}\in \Sigma(\bG,\bT)$ of $\alpha$ in the absolute root system. Recall that the Galois group $\Gamma=\Gal(\overline{F}/F)$ acts on $\Sigma(\bG,\bT)$. Let $\Gamma_{\Tilde{\alpha}}\subset \Gamma$ be the stabilizer of $\Tilde{\alpha}$ and $F_{\Tilde{\alpha}}\subset \overline{F}$ be the fixed field, so that $F_{\Tilde{\alpha}}/F$ is unramified by our assumption. We then denote by $\fo_{\Tilde{\alpha}}$ the ring of integers of $F_{\Tilde{\alpha}}$, with the maximal ideal $\fp_{\Tilde{\alpha}}$. 
\begin{itemize}
    \item  
   First, suppose $2\alpha\notin \Sigma(\bG,\bT_0)$.  In this case the natural $F_{\Tilde{\alpha}}$-embedding $\bU_{\Tilde{\alpha}}\hookrightarrow \bU_{\alpha}$ induces an $F$-isomorphism 
\[
\Res_{F_{\Tilde{\alpha}}/F}\bU_{\Tilde{\alpha}}\xrightarrow{\sim} \bU_{\alpha}
\]
(cf. \cite[pp. 97]{kaletha}). In our setting, $\bU_{\alpha}$ and the above isomorphism are defined over $\fo$. Since $F_{\Tilde{\alpha}}/F$ is unramified, 
   \begin{align*}
    [\bU_{\alpha}(\fo):\bU_{\alpha}(\fp)]
    &=[\bU_{\Tilde{\alpha}}(\fo_{\Tilde{\alpha}}):\bU_{\Tilde{\alpha}}(\fp_{\Tilde{\alpha}})]\\
    &=[\fo_{\Tilde{\alpha}}:\fp_{\Tilde{\alpha}}]=q^{[F_{\Tilde{\alpha}}:F]}=q^{\dim \bU_{\alpha}}.
    \end{align*}
\item 
    The second situation is $2\alpha\in \Sigma(\bG,\bT_0)$. In this case the structure of the relative root subgroup $\bU_\alpha$ is more complicated to explain. Indeed this case only occurs when $\Sigma(\bG,\bT)$ has an irreducible factor of type $A_{2n}$ and $\Gamma$ acts nontrivially on this type $A_{2n}$ factor.

    The preimage of $\alpha$ in $\Sigma(\bG,\bT)$ is of even order, and there is a unique $\Tilde{\alpha}'$ in the preimage which is not orthogonal to $\Tilde{\alpha}$. Then $\Tilde{\beta}:=\Tilde{\alpha}+\Tilde{\alpha}'$ lies in $\Sigma(\bG,\bT)$ and is mapped to $2\alpha\in \Sigma(\bG,\bT_0)$. We denote by $F_{\Tilde{\beta}},\fo_{\Tilde{\beta}},\fp_{\Tilde{\beta}}$ the similar notations for $\Tilde{\beta}$ (as those for $\Tilde{\alpha}$); then $F_{\Tilde{\alpha}}/F_{\Tilde{\beta}}$ is quadratic. We have $\bU_\alpha\simeq \Res_{F_{\Tilde{\beta}}/F}\bU_{[\Tilde{\alpha}]}$ over $\fo$, where $\bU_{[\Tilde{\alpha}]}$ is isomorphic to the unipotent radical of a Borel subgroup of the quasi-split $\SU_3(F_{\Tilde{\alpha}}/F_{\Tilde{\beta}})$ (\cite[pp. 97]{kaletha}). More explicitly, if we denote by $a\mapsto \overline{a}$ the nontrivial element of $\Gal(F_{\tilde{\alpha}}/F_{\tilde{\beta}})$, then 
    \[
    \bU_{[\Tilde{\alpha}]}=\{u(a,b):=\begin{bmatrix}
        1 & a & b \\ & 1 & \overline{a} \\ && 1
    \end{bmatrix}\mid a,b\in F_{\tilde{\alpha}},~a\overline{a}=b+\overline{b}\}
    \]
    as a group over $F_{\tilde{\beta}}$. We have an exact sequence 
    \[
    1\rightarrow \bU^0_{[\tilde{\alpha}]}\rightarrow \bU_{[\tilde{\alpha}]}\rightarrow \Res_{F_{\tilde{\alpha}}/F_{\tilde{\beta}}}\bG_a\rightarrow 1
    \]
  of $F_{\tilde{\beta}}$-groups, where $\bG_a$ is the additive group of $F_{\tilde{\alpha}}$ and 
    \[
    \bU^0_{[\tilde{\alpha}]}=\{u(0,b)\mid b\in F_{\tilde{\alpha}},~b+\overline{b}=0\}.
    \]
    
  Since $F_{\tilde{\alpha}}\supset F_{\tilde{\beta}}\supset F$ are unramified, $\bU^0_{[\tilde{\alpha}]}(\fo_{\tilde{\beta}})/\bU^0_{[\tilde{\alpha}]}(\fp_{\tilde{\beta}})$ is just the subgroup of $\fo_{\alpha}/\fp_{\alpha}$ consisting of elements of quadratic trace $0$, hence of order $[\fo_{\tilde{\beta}}:\fp_{\tilde{\beta}}]$, and
\begin{align*}
    [\bU_{\alpha}(\fo):\bU_\alpha(\fp)]&=[\bU_{[\Tilde{\alpha}]}(\fo_{\Tilde{\beta}}):\bU_{[\Tilde{\alpha}]}(\fp_{\Tilde{\beta}})]=[\fo_{\tilde{\beta}}:\fp_{\tilde{\beta}}]^3\\
    &=q^{[F_{\tilde{\beta}}:F]\cdot \dim_{F_{\tilde{\beta}}}(U_{[\tilde{\alpha}]})} =q^{\dim \bU_\alpha}.
\end{align*}
    \end{itemize}  
\end{proof}

\begin{remark}
    If $\bG$ is an arbitrary connected reductive group over $F$ which merely satisfies one of the following conditions:
    \begin{enumerate}
        \item quasi-split;
        \item splits over a finite unramified extension over $F$,
    \end{enumerate}
    then the above proposition does not hold in general. (For some examples, cf. Appendix E of \cite{gan-ichino-formal-degree}, the tables of case A.) For this reason, we add the unramified condition for our main theorem to make the explicit formula less technical.
\end{remark}

\section{Tools from Harmonic Analysis}\label{section harmonic analysis}
In the rest of this paper, except for \S\ref{subsection construction of discrete parameters}, $\bG$ will denote a quasi-split connected reductive group over $F$ with a fixed Borel subgroup $\bB=\bT\bU$, and $\bT_0$ the maximal $F$-split subtorus of $\bT$. We then use standard notations explained at the end of \S\ref{subsection general notations}.

In this section, we recall Shahidi's results and apply Heiermann's formula to our case. The main difficulty of the latter is an explicit comparison between two different measures on the orbit. 

Throughout \S 4, we fix
\begin{itemize}
    \item a standard maximal parabolic subgroup $\bP=\mathbf{M}\mathbf{N}$ of our quasi-split group $\mathbf{G}$ corresponding to $\Delta-\{\alpha\}$ for a positive simple root $\alpha\in \Delta$;
    \item a special maximal compact subgroup $K_G\subset G=\bG(F)$. 
\end{itemize}
For any closed subgroup $H$ of $G$, we equip $H$ with the Haar measure such that $K_G\cap H:=K_H$ has volume one.

\subsection{Results from Langlands-Shahidi method}\label{subsection shahidi}
 
Let $\rho_{\bP}\in\fa_{\mathbf{M}}^*$ be the half sum of positive roots appearing in $\mathbf{N}$ and
$$\Tilde{\alpha}:=\langle \rho_{\bP},\alpha^\vee\rangle^{-1} \rho_{\bP}\in \fa_{\mathbf{M}}^*$$
be the corresponding fundamental weight. Denote by $r$ the adjoint representation of ${^L\!M}$ on $\widehat{\mathfrak{n}}:=\Lie(\widehat{N})$. Then $r$ decomposes as $(r,\widehat{\mathfrak{n}})=\bigoplus_{i=1}^m (r_i,V_i)$, where each irreducible component $(r_i,V_i)$ is given by
$$V_i:=\{X_{\beta^\vee}\mid \beta\in \Sigma, \langle \Tilde{\alpha},\beta^\vee\rangle=i\},\quad 1\leq i\leq m,$$
cf. \cite[Proposition 4.1]{shahidi-ramanujan}. Let $W=W^{\bG}(\bT_0)$ be the Weyl group of $\bG$ with respect to the maximal split torus $\bT_0$ and $w_0\in W=W^{\bG}(\bT_0)$ be the element of the Weyl group characterized by $w_0(\Delta-\{\alpha\})\subset \Delta$ and $w_0\alpha\in \Sigma^-$, i.e. the longest element modulo the Weyl group of $\bM$.

 Generic discrete series of $G$ supported on a maximal Levi subgroup were classified by Shahidi as follows.
\begin{theorem}[{\cite[ Theorem 8.1]{shahidi-plancherel}}]\label{shahidi classification}
    Let $\pi$ be a generic discrete series of $G$ supported on $\mathbf{M}$. Then there exists an irreducible generic unitary supercuspidal representation $\sigma$ of $M$ satisfying the following properties:
    \begin{enumerate}
        \item $\sigma$ is ``ramified", i.e. $w_0\sigma\isom \sigma$, and $i_P^G\sigma$ is irreducible;
        \item there exists a unique $j\in\{1,2\}$, such that $L(s,\sigma,r_j)$ has a simple pole at $s=0$;
        \item the representation $i_P^G\sigma_{\Tilde{\alpha}/j}$ (which is of length two) has a unique generic irreducible subrepresentation isomorphic to $\pi$, and its Langlands quotient is non-generic, non-tempered, and pre-unitary.
    \end{enumerate}
\end{theorem}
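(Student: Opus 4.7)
The strategy is to combine Harish-Chandra's theory of the Plancherel density $\mu$ (which detects discrete series inside a parabolic induction) with Shahidi's factorization of the local coefficient through $\gamma$-factors along the decomposition $r=\bigoplus_{i=1}^m r_i$. By the structure theorem on cuspidal support there is an irreducible unitary supercuspidal $\sigma$ of some Levi and a parameter $\lambda$ with $\pi$ a subquotient of $i_P^G\sigma_\lambda$. Because $\pi$ is supported on the maximal Levi $\bM$ and is generic, uniqueness of the Whittaker model together with Rodier's heredity force $\sigma$ to live on $\bM$ and to be itself generic; since $\dim\fa_{\bM}^{\bG *}=1$, one may write $\lambda=s\Tilde{\alpha}$ with $s\ge 0$ after using $w_0$-symmetry.

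\textbf{Pinning down $s$ and $j$.} Harish-Chandra's discrete series criterion (cf.\ Silberger) asserts that $i_P^G\sigma_{s\Tilde{\alpha}}$ contains a discrete series constituent at $s>0$ precisely when the Plancherel density $\mu(s\Tilde{\alpha},\sigma)$ vanishes there. Shahidi's formula expresses $\mu(s\Tilde{\alpha},\sigma)$, up to a harmless nonzero constant, through the $\gamma$-factors $\gamma(is,\sigma,r_i,\psi)$ for $i=1,\ldots,m$. Combining the Langlands--Shahidi functional equation with the Casselman--Shahidi holomorphy bound---that $L(s,\sigma,r_i)$ is holomorphic for $\Re(s)>0$ whenever $\sigma$ is a generic supercuspidal---one finds that a zero of $\mu(s\Tilde{\alpha},\sigma)$ on the positive real axis must come from a pole of some $L(s,\sigma,r_j)$ at $s=0$; the root-datum scaling $\langle\Tilde{\alpha},\beta^\vee\rangle=i$ on $V_i$ then pins the zero down to $s=1/j$. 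Rigidity of the pole structure gives uniqueness of $j\in\{1,2\}$.

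\textbf{Remaining assertions and main obstacle.} The ramification condition $w_0\sigma\cong\sigma$ is a necessary prerequisite for $L(s,\sigma,r_j)$ to admit a pole at $s=0$; with this in hand, Harish-Chandra's criterion on the unitary axis (with trivial $R$-group) yields irreducibility of $i_P^G\sigma$ and, via Knapp--Stein analysis at the reducibility point of a maximal parabolic subgroup, the length-two decomposition of $i_P^G\sigma_{\Tilde{\alpha}/j}$. That $\pi$ occurs as the \emph{subrepresentation} (and not as the Langlands quotient) is the content of the standard module conjecture for generic representations (Casselman--Shahidi, Mui\'c): a generic constituent of a reducible standard module is necessarily the subrepresentation. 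The Langlands quotient is then automatically non-generic and non-tempered, and pre-unitary as a point in the complementary series. The main obstacle is the precise translation between zeros of $\mu$ and poles of Langlands--Shahidi $L$-functions, which rests on three substantial pillars---multiplicativity of local coefficients, the $\gamma$-factor functional equation, and the Casselman--Shahidi holomorphy result---each of which is a nontrivial theorem in the Langlands--Shahidi program.
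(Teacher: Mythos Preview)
The paper does not prove this theorem; it is quoted from \cite[Theorem~8.1]{shahidi-plancherel} and used as a black box. There is thus no in-paper argument to compare your sketch against.

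Your outline does track Shahidi's actual strategy, but it contains one substantive error. You assert that $i_P^G\sigma_{s\Tilde{\alpha}}$ has a discrete series constituent at $s>0$ precisely when the Plancherel density $\mu(s\Tilde{\alpha},\sigma)$ \emph{vanishes} there. In the convention of this paper (and of Shahidi and Heiermann) it is the opposite: the $\mu$-function has a \emph{pole} at $s=1/j$. The paper states this explicitly just before Theorem~\ref{Heiermann's formula} (``$\Tilde{\alpha}/j$ is the only pole of the $\mu$-function on $\RR_{>0}^\times\Tilde{\alpha}$''), and Heiermann's formula itself is a residue of $\mu$ at that point. The correct chain is: $L(s,\sigma,r_j)$ has a simple pole at $s=0$ $\Longleftrightarrow$ $\gamma(s,\sigma,r_j,\psi)$ has a simple pole at $s=1$ (as the paper remarks immediately after the theorem) $\Longleftrightarrow$ the factor $\gamma(js,\sigma,r_j,\bar\psi)$ in Theorem~\ref{shahidi main theorem} gives $\mu(\sigma\otimes\chi_{s\Tilde{\alpha}})$ a pole at $s=1/j$. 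With the pole/zero direction fixed, the remaining ingredients you cite---ramification $w_0\sigma\cong\sigma$ as a precondition for the pole, irreducibility of $i_P^G\sigma$ via trivial $R$-group, length two at the reducibility point, and the standard module conjecture forcing the generic constituent to be the subrepresentation---are the right ones.
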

Since by \cite[Proposition 7.8]{shahidi-plancherel} we have (for any irreducible generic supercuspidal $\sigma$)
\begin{align*}
    \overline{\gamma(s,\sigma,r_i,\psi)}&=\gamma(\bar{s},\Tilde{\sigma},r_i,\Bar{\psi})\\
    &=\varepsilon(\bar{s},\Tilde{\sigma},r_i,\Bar{\psi})\cdot \frac{L(1-\Bar{s},\sigma,r_i)}{L(\bar{s},\Tilde{\sigma},r_i)},
\end{align*}
``$L(s,\sigma,r_i)$ has a simple pole at $s=0$" could be paraphrased as ``$\gamma(s,\sigma,r_i,\psi)$ has a simple pole at $s=1$".

Next we recall the definition of Harish-Chandra $\mu$-function. The intertwining operator $J_{\bar{P}|P}(\sigma):i_P^G\sigma\rightarrow i_{\bar{P}}^G\sigma$ is defined to be
$$J_{\bar{P}|P}(\sigma)f(g):=\int_{\bar{N}}f(\bar{n}g)d\bar{n},\quad (f\in i_P^G\sigma, g\in G),$$
and the Harish-Chandra $\mu$-function is a rational function $\mu$ on $\cO=\cO_{\sigma}$ defined by
$$J_{P|\bar{P}}(\sigma')\circ J_{\bar{P}|P}(\sigma')=\mu(\sigma')^{-1}$$
for $\sigma'$ in a Zariski open subset of $\cO$. Note that the definition here is compatible with \cite[\S 1.5]{heiermann-spectrales} but differs from that in \cite{shahidi-plancherel}\cite{waldspurger-plancherel}. When $\bP=\bM\bN$ is maximal, the $\mu$-function here is equal to $\gamma(\bG/\bM)^{-2}\mu$ defined in \cite{shahidi-plancherel}\cite{waldspurger-plancherel}.

The main theorem 3.5 and corollary 3.6 of \cite{shahidi-plancherel} then reads
\begin{theorem}\label{shahidi main theorem}
    Let $\sigma$ be an irreducible generic unitary supercuspidal representation of $M$. Then
    \[\mu(\sigma\otimes \chi_{s\Tilde{\alpha}})=\prod_{i=1}^m \gamma^{\Sh}(is,\sigma,r_i,\Bar{\psi})\gamma^{\Sh}(-is,\Tilde{\sigma},r_i,\psi),\]
    where $\gamma^{\Sh}$ are Shahidi's $\gamma$-factors characterized in \cite[Theorem 3.5]{shahidi-plancherel}.
\end{theorem}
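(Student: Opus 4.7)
The plan is to deduce this theorem as essentially a reformulation of Shahidi's defining identity for $\gamma^{\Sh}$ in [Shahidi-Plancherel, Theorem 3.5 and (3.12)], with the only real work being to reconcile the two conventions for the $\mu$-function. Since the theorem statement itself is presented as a repackaging of Shahidi's results, the proof is essentially a bookkeeping exercise rather than a new argument.

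First I would recall Shahidi's original formula (3.12), which expresses $\mu_{\Sh}(\sigma\otimes\chi_{s\tilde\alpha})$ (with Shahidi's normalization of the $\mu$-function, inherited from Waldspurger) in the form
\[
\gamma(\bG/\bM)\,\mu_{\Sh}(\sigma\otimes\chi_{s\tilde\alpha}) = \prod_{i=1}^m \gamma^{\Sh}(is,\sigma,r_i,\bar\psi)\,\gamma^{\Sh}(-is,\tilde\sigma,r_i,\psi),
\]
where the decomposition $r=\bigoplus_i r_i$ and Shahidi's $\gamma^{\Sh}$-factors are as characterized inductively in Theorem 3.5. Next, I would invoke the normalization comparison recalled in the paragraph just before the theorem statement: for maximal $\bP$, the Heiermann-style $\mu$-function used in this paper satisfies $\mu = \gamma(\bG/\bM)\,\mu_{\Sh}$. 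Substituting this equality into the displayed formula above immediately yields the stated identity, with the factor $\gamma(\bG/\bM)$ cancelling cleanly.

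The main point requiring care (and the only real potential obstacle) is notational consistency. One must verify that the arguments of each $\gamma^{\Sh}$ match across the two conventions: in particular, the appearance of $\bar\psi$ versus $\psi$ in the two factors of the product, the signs of $is$ and $-is$, the pairing of $\sigma$ with $\bar\psi$ and $\tilde\sigma$ with $\psi$, and the indexing of the irreducible components $(r_i,V_i)$ of $r$ according to the eigenvalue $\langle\tilde\alpha,\beta^\vee\rangle=i$. Tracking these conventions through Shahidi's definitions, and through the choice of Haar measure on $\bar N$ hidden inside the $\mu$-function, is the bulk of the verification.

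As a sanity check, both sides of the stated identity are manifestly invariant under the involution $(\sigma,s)\leftrightarrow(\tilde\sigma,-s)$: on the left, this reflects the general symmetry $\mu(\sigma_\lambda)=\mu(\tilde\sigma_{-\lambda})$ of Harish-Chandra's $\mu$-function; on the right, it follows by simply swapping the two factors inside the product. This compatibility, together with the explicit form of $\gamma(\bG/\bM)$ appearing in Shahidi's formula, makes the reconciliation of the two normalizations transparent once the conventions are pinned down.
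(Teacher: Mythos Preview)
Your proposal is correct and matches the paper's treatment exactly: the paper does not give a separate proof but simply presents the theorem as a reformulation of Shahidi's main theorem together with (3.12) of \cite{shahidi-plancherel}, after noting in the preceding paragraph that the $\mu$-function used here equals $\gamma(\bG/\bM)$ times Shahidi's (and Waldspurger's) $\mu$-function when $\bP$ is maximal. Your bookkeeping of the normalization and the $\psi$ vs.\ $\bar\psi$ conventions is precisely the content being invoked.
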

\begin{remark} Assuming the L-parameter $\varphi_{\sigma}$ of $\sigma$ (conjectural in general), it is believed that these $\gamma^{\Sh}(s,\sigma,r_i,\psi)$ should coincide with corresponding Artin $\gamma$-factors $\gamma(s,r_i\circ\varphi_{\sigma},\psi)$ obtained from the L-parameter. Though verified in many cases, this has not yet been established in full generality.
\end{remark}

\subsection{Heiermann's formula}\label{subsection Heiermann's formula}
In this subsection, we then fix 
\begin{itemize}
    \item  a generic discrete series representation $\pi$ of $G$ supported on $\mathbf{M}$; 
    \item an irreducible unitary supercuspidal $\sigma$ of $M$, and a unique $j\in\{1,2\}$, such that $\pi$ is the unique irreducible generic subrepresentation of $i_P^G(\sigma_{\Tilde{\alpha}/j})$. 
\end{itemize} 

In \cite[\textsection 8.6]{heiermann-spectrales}, as a byproduct of his spectral decomposition, Heiermann established a formula of $\deg(\pi)/\deg(\sigma)$ in terms of the residues of the $\mu$-function. To apply his formula to our case, we need some preparations.

First let $W=W^{\bG}(\bT_0)$ be the Weyl group of $(\mathbf{G},\mathbf{T}_0)$. Following notations of \cite{heiermann-spectrales}, let 
$$\mathcal{W}(\mathbf{M}):=\{w\in W\mid w\mathbf{M} w^{-1}=\mathbf{M}\}$$
and $W^{\mathbf{M}}$ be the Weyl group of $\mathbf{M}$, identified as a normal subgroup of $\mathcal{W}(\mathbf{M})$. Let $W(\mathbf{M}):=\mathcal{W}(\mathbf{M})/W^{\mathbf{M}}\isom N_G(\bM)/M$.

\begin{lemma}
    $|W(\mathbf{M})|\leq 2$.
\end{lemma}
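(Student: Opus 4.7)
The plan is to show that $W(\bM)$ embeds into $\GL_1(\ZZ)=\{\pm 1\}$ by making it act faithfully on the one-dimensional real space $\fa_\bM^{\bG *}$ while preserving the natural full-rank lattice therein.

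First, I would use the standard identifications $\bM = Z_G(\bA_\bM)$ and $N_G(\bM) = N_G(\bA_\bM)$, which give
\[
W(\bM) \isom N_G(\bA_\bM)/Z_G(\bA_\bM).
\]
By construction this group acts faithfully by conjugation on $\bA_\bM$, and dually on $X^*(\bA_\bM)\otimes \RR \isom \fa_\bM^*$ (via the canonical isomorphism $\res\otimes \mathbbm{1}$).

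Next, since $\bA_\bG \subset Z(\bG)$ is centralized by $G$, the action of $W(\bM)$ on $\fa_\bM^*$ fixes the subspace $\fa_\bG^*$ pointwise. For any $w \in W(\bM)$ acting trivially on the quotient $\fa_\bM^*/\fa_\bG^* \isom \fa_\bM^{\bG *}$, one has $(w-1)\fa_\bM^* \subset \fa_\bG^*$ and $(w-1)\fa_\bG^* = 0$, so $(w-1)^2 = 0$; together with the finite order of $w$, this forces $w = 1$. Hence the induced action yields a faithful embedding $W(\bM) \hookrightarrow \GL(\fa_\bM^{\bG *})$.

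Finally, because $\bM$ is maximal (corresponding to $\theta = \Delta-\{\alpha\}$), $\fa_\bM^{\bG *}$ is one-dimensional, and the action preserves the full-rank lattice $X^*(\bA_\bM/\bA_\bG) \subset \fa_\bM^{\bG *}$. This produces $W(\bM) \hookrightarrow \GL_1(\ZZ) = \{\pm 1\}$, giving the desired bound. I do not foresee any real obstacle; the only delicate point is the faithfulness of the induced action on the quotient, which is handled by the unipotent-plus-finite-order argument above.
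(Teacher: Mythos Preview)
Your argument is correct but takes a genuinely different route from the paper's. The paper proceeds combinatorially: it chooses in each coset of $W(\bM)$ a representative $w$ with $w\theta=\theta$, observes that then $w\alpha\in(\pm\alpha+\ZZ\langle\theta\rangle)$, and shows that the $+$ case forces $w=1$ while at most one element can land in the $-$ case. Your approach is more structural: you let $W(\bM)$ act on $\fa_\bM^*$, note that $\fa_\bG^*$ is fixed pointwise, and use the unipotent-plus-finite-order trick to get a faithful action on the one-dimensional quotient preserving a lattice, hence an embedding into $\GL_1(\ZZ)$. Your proof is slicker and generalizes at once (for a Levi of corank $r$ one gets $W(\bM)\hookrightarrow\GL_r(\ZZ)$); the paper's argument, on the other hand, concretely identifies the potential nontrivial element as the $w$ with $w\theta=\theta$ and $w\alpha\in(-\alpha+\ZZ\langle\theta\rangle)$, which feeds directly into the next corollary where that element is recognized as $w_0$.
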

\begin{proof}
    Let $\ZZ\langle \theta\rangle$ be the free abelian subgroup of $X^*(\mathbf{T}_0)$ generated by $\theta:=\Delta-\{\alpha\}$ and $\Sigma_{\theta}:=\ZZ\langle \theta\rangle \cap \Sigma$ the root subsystem of $\Sigma$ generated by $\theta$, thus $\Sigma_{\theta}=\Sigma(\mathbf{M},\mathbf{T}_0)$. Then $\mathcal{W}(\mathbf{M})$ is the subgroup of $W$ preserving $\Sigma_{\theta}$, or equivalently, preserving $\ZZ\langle \theta\rangle$.

    For any other system $\theta'$ of simple roots of $\Sigma_{\theta}$, there exists $w'\in W^{\mathbf{M}}$ such that $w'\theta'=\theta$. Therefore any $\overline{w}\in W(\mathbf{M})$ admits a representative $w$ with $w\theta=\theta$. Since now $w\Delta=w\theta\cup \{w\alpha\}=\theta\cup \{w\alpha\}$ spans $\Sigma$ over $\ZZ$, $w\alpha\in (\pm \alpha +\ZZ\langle \theta\rangle)$.

    If $w\alpha\in ( \alpha +\ZZ\langle \theta\rangle)$, then since $w\Delta=\theta\cup \{w\alpha\}$ and $\Delta$ are two systems of simple roots, $w\alpha\in ( \alpha +\ZZ_{\geq 0}\langle \theta\rangle)$. In this case $w\Delta\subset \Sigma^+$, hence $w\Sigma^+\subset \Sigma^+$ and $w=1$.

    Thus if $\Bar{w}\in W(\mathbf{M})$ is a nontrivial element, it has a representative $w$ with $w\theta=\theta$ and $w\alpha\in (-\alpha+\ZZ\langle \theta\rangle)$. But if $w_1,w_2$ are two such elements of $W$, then $w_1^{-1}w_2\theta=\theta$ and $w_1^{-1}w_2\alpha\in (\alpha+\ZZ\langle \theta\rangle)$, which implies $w_1=w_2$ by the former case. This completes the proof.
\end{proof}

Recall that $\fa_{\mathbf{M}}^*=\fa_{\mathbf{G}}^*\oplus \fa_{\mathbf{M}}^{\mathbf{G} *}$ and $\rho_{\bP}\in \fa_{\mathbf{M}}^{\mathbf{G} *}\isom \RR$ . The corresponding fundamental weight 
    \[
    \Tilde{\alpha}:=\langle \rho_{\bP},\alpha^\vee\rangle ^{-1}\rho_{\bP}
    \]
    is then characterized as an element of $\fa_{\mathbf{M}}^{\mathbf{G} *}$ satisfying
    \[
    \langle \Tilde{\alpha},\alpha^\vee\rangle=1.
    \]
Since we only care about the ``relative position" of the twisted characters in inductions, let 
\begin{itemize}
\item $X^*(\bA_{\bM})^{\bG}=X^*(\bA_{\bM}/\bA_{\bG})\subset X^*(\bA_{\bM})$ and $X^*(\bM)^{\bG}\subset X^*(\bM)$ be the subgroup of characters of $\bM$ trivial on $\bA_{\bG}$;
    \item $\chi$ be the generator of $X^*(\mathbf{M})^{\mathbf{G}}\isom \ZZ$ with $\langle \chi,\alpha^\vee\rangle >0$ (here we identify $\chi\in X^*(\bM)$ with an element of $X^*(\bT_0)$ through $\fa_0^*=\fa_{\bM}^*\oplus \fa_0^{\bM*}$, cf. \S\ref{subsection preliminaries harmonic});
    \item  $X^{\ur}(M)^G$ (resp. $X_0^{\ur}(M)^G$) be the image of $\mathfrak{a}_{\bM,\CC}^{\bG *}$ (resp. $i\mathfrak{a}_{\bM}^{\bG *}$) under the map $\mathfrak{a}_{\bM,\CC}^{\bG *}\twoheadrightarrow X^{\ur}(M)$ (note that the group $X^{\ur}(M)^G$ here is denoted by $X_M^G$ in \cite[pp. 7]{eisenstein});
    \item $\mathcal{O}^G$ (resp. $\mathcal{O}_0^G$) be the $X^{\mathrm{ur}}(M)^G$-orbit (resp. $X^{\mathrm{ur}}_0(M)^G$-orbit) of $\sigma$, namely
    \[
    \mathcal{O}^G=\{\sigma\otimes \chi\mid \chi\in X^{\mathrm{ur}}(M)^G\},\quad \mathcal{O}_0^G=\{\sigma\otimes \chi\mid \chi\in X_0^{\mathrm{ur}}(M)^G\}
 \]
     in the sense of isomorphism classes.
\end{itemize}
For our $\pi,\sigma$, let $A:=\{\sigma_{\Tilde{\alpha}/j}\}\subset \mathcal{O}^G$ be the corresponding ``root hyperplane" and
$$\mathrm{Stab} (A):=\{w\in W(\mathbf{M})\mid wA=A\}=\{w\in W(\mathbf{M})\mid w\sigma_{\Tilde{\alpha}/j}\isom \sigma_{\Tilde{\alpha}/j}\}$$
as introduced in \cite[\textsection 6.1]{heiermann-spectrales}. Since $|W(\mathbf{M})|\leq 2$, and by Shahidi's classification $w_0\sigma\isom\sigma$, we get the following
\begin{corollary}
    $|\mathrm{Stab}(A)|=1$.
\end{corollary}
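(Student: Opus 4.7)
The plan is to leverage the preceding lemma ($|W(\bM)|\leq 2$) together with the ``ramification'' condition $w_0\sigma\cong\sigma$ from Theorem~\ref{shahidi classification}(1), reducing the question to how the potential nontrivial element of $W(\bM)$ acts on $\Tilde{\alpha}$. If $W(\bM)=\{1\}$ the statement is immediate, since $1\in\mathrm{Stab}(A)$ is the only option.

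Otherwise $|W(\bM)|=2$, and the element $w_0$ introduced just before Theorem~\ref{shahidi classification} lies in $\mathcal{W}(\bM)$ and represents the nontrivial class. By the argument in the proof of the preceding lemma, $w_0\theta=\theta$ and $w_0\alpha\in -\alpha+\ZZ\langle\theta\rangle$. Under the decomposition $\fa_0^*=\fa_\bM^*\oplus\fa_0^{\bM*}$, the roots in $\theta$ are trivial on $\bA_\bM$ and lie in $\fa_0^{\bM*}$; projecting the identity for $w_0\alpha$ to $\fa_\bM^*$ therefore gives $w_0\cdot(\alpha|_{\fa_\bM^*})=-\alpha|_{\fa_\bM^*}$. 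Since $\bP$ is maximal, $\fa_\bM^{\bG*}$ is one-dimensional and both $\alpha|_{\fa_\bM^*}$ and $\Tilde{\alpha}$ are nonzero (positive) elements of it, so $w_0\Tilde{\alpha}=-\Tilde{\alpha}$.

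Combined with $w_0\sigma\cong\sigma$, this yields
\[
w_0\cdot\sigma_{\Tilde{\alpha}/j}\cong (w_0\sigma)\otimes\chi_{w_0\Tilde{\alpha}/j}\cong\sigma_{-\Tilde{\alpha}/j}.
\]
To close the argument I would show $\sigma_{-\Tilde{\alpha}/j}\not\cong\sigma_{\Tilde{\alpha}/j}$ by a central-character computation: such an isomorphism would force $\sigma\cong\sigma\otimes\chi_{2\Tilde{\alpha}/j}$, but $2\Tilde{\alpha}/j$ is a nonzero \emph{real} vector in $\fa_\bM^{\bG*}$, and the kernel of $\fa_{\bM,\CC}^*\twoheadrightarrow X^{\ur}(M)$ is purely imaginary (cf.\ \textsection\ref{subsection preliminaries harmonic}). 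Its image $\chi_{2\Tilde{\alpha}/j}$ is thus nontrivial, with nontrivial positive-real restriction to $\bA_\bM$, which is incompatible with the unitarity of the central character of $\sigma$.

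The only mildly subtle point is matching $\Tilde{\alpha}$ with a positive multiple of $\alpha|_{\fa_\bM^*}$ inside the one-dimensional space $\fa_\bM^{\bG*}$; once that identification is in hand, the corollary follows formally from the preceding lemma and Shahidi's classification, with no further input needed.
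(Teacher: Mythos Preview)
Your proof is correct and follows essentially the same route as the paper's. The paper's argument is terser---it simply notes that $w_0$ sends $\Tilde{\alpha}$ to the negative Weyl chamber (which, since $\fa_{\bM}^{\bG*}$ is one-dimensional, amounts to your computation $w_0\Tilde{\alpha}=-\Tilde{\alpha}$) and concludes $w_0\sigma_{\Tilde{\alpha}/j}\not\simeq\sigma_{\Tilde{\alpha}/j}$ without spelling out the central-character/unitarity step that you make explicit.
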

\begin{proof}
     By Shahidi's classification, $w_0\sigma\isom \sigma$, where $w_0$ is determined by $w_0(\Delta-\{\alpha\})\subset \Delta$ and $w_0\alpha\in \Sigma^-$.
     So by the above lemma $W(\bM)=\{1,\overline{w_0}\}$, but $w_0$ maps $\Tilde{\alpha}$ to a negative Weyl chamber, thus $w_0\sigma_{\Tilde{\alpha}/j}\not\simeq \sigma_{\Tilde{\alpha}/j}$ and $\mathrm{Stab} (A)=\{1\}$.
\end{proof}

Next we let $y_1\in\RR^\times_{>0}$ be the smallest positive real number such that $\sigma\otimes \chi_{i y_1\Tilde{\alpha}}\isom \sigma$, so that $y_1\in \QQ^\times _{>0}\cdot \left(\frac{2\pi}{\log q}\right)$ and
\[
\cO_0^G=\{\sigma\otimes \chi_{i y_1\Tilde{\alpha}}\mid y\in [0,y_1)\}.
\]
We fix the basis $\Tilde{\alpha}$ of $\fa_{\mathbf{M},\CC}^{\mathbf{G}*}=\CC\cdot \Tilde{\alpha}$. 

In our case, the induced representation $i_P^G\sigma_{\Tilde{\alpha}/s}$ is irreducible for $s>0$ and $s\neq 1/j$ by Theorem 8.1 of \cite{shahidi-plancherel}. Hence on the half line $s\Tilde{\alpha}$, $s>0$, the unique pole of $\mu(\sigma\otimes\chi_{s\Tilde{\alpha}})$ is $s=1/j$ (\cite[Lemma 5.4.2.4]{silberger}). Now applying the residue theorem in complex analysis to the contour
\[
 \begin{tikzpicture}
  \draw (0.4,1.2)node[above]{$iy_1$};
  \draw (-0.4,0)node[below]{$O$};
  \draw (4,0)node[below]{$r\gg 0$};
  \draw (4,1.2)node[above]{$r+iy_1$};
  \draw (1.2,-0.04)node[below]{$1/j$};
 \draw (-1,0)--(0,0);
 \draw (0,1.8)--(0,-1);
  \draw (4,0)--(5,0);
\draw[thick] (0,0) -- (1.1,0) arc(180:360:0.1) -- (4,0) ;
\draw[thick] (0,1.2) -- (1.1,1.2) arc(180:360:0.1) -- (4,1.2) ;
         \draw[thick] (0,0)--(0,1.2);
           \draw[thick] (4,1.2)--(4,0);
           \node at (1.2,0)[circle,fill,inner sep=1pt]{};
           \node at (1.2,1.2)[circle,fill,inner sep=1pt]{};
\node at (0,1.2)[circle,fill,inner sep=1pt]{};
\node at (4,1.2)[circle,fill,inner sep=1pt]{};
\node at (0,0)[circle,fill,inner sep=1pt]{};
\node at (4,0)[circle,fill,inner sep=1pt]{};
       \end{tikzpicture}
\]
we get 
\[
\int_0^{y_1}\mu(\sigma\otimes \chi_{(r+iy)\Tilde{\alpha}})dy=\int_0^{y_1}\mu(\sigma\otimes\chi_{iy\Tilde{\alpha}})dy+2\pi \cdot \underset{s=1/j}{\Res}\mu(\sigma\otimes \chi_{s\Tilde{\alpha}}).\quad (r\gg 0)
\]

However, the measure on $\cO_0^G$ determined by the coordinate $\Tilde{\alpha}$ as above is not the standard one used in local harmonic analysis. The standard one is defined as follows. One equips $X_0^{\ur}(A_{\bM})^G$ with the Haar measure such that it has total volume one, and then requires that
\[
\res:X_0^{\ur}(M)^G\rightarrow X_0^{\ur}(A_{\bM})^G
\]
and 
\[
X_0^{\ur}(M)^G\twoheadrightarrow \cO_0^G
\]
preserve the measures locally. Then with respect to this standard measure on $\cO_0^G$, we have
\begin{equation}\label{residue operator}
    \int_{\cO_0^G} \mu(\sigma\otimes \chi_{r\Tilde{\alpha}})d\sigma=\int_{\cO_0^G} \mu(\sigma)d\sigma+\frac{\vol(\cO_0^G)}{y_1}\cdot 2\pi \cdot \underset{s=1/j}{\Res}\mu(\sigma\otimes \chi_{s\Tilde{\alpha}}).\quad (r\gg 0)
\end{equation}

In order to make explicit use of Heiermann's formula, we shall next compute the quotient $\vol(\cO_0^G)/y_1$ explicitly.

Suppose
    \[
    |\chi(M)|=q^{l\ZZ}
    \]
    for some $l\in \ZZ_{\geq 1}$, then we have
    \[
    X^{\ur}_0(M)^{G}=\{|\chi|^{\left(\frac{2\pi i}{\log q}\right)s}\mid s\in [0,\frac{1}{l})\}.
    \]
Let 
    \[\mathrm{Stab}_M^G(\sigma):=\{\chi\in X^{\ur}(M)^G\mid \sigma\otimes \chi\isom \sigma\}.
    \]
    This is a finite subgroup of $X_0^{\ur}(M)^{G}$. We denote its order by $t$ (the ``torsion number"). 
    Then the smallest positive number $s_1\in \RR_{> 0}^\times$ such that 
    \[
    \sigma\otimes |\chi|^{is_1}\isom \sigma
    \]
    is clearly
    \[
    s_1=\left(\frac{2\pi }{\log q}\right)\cdot \frac{1}{lt}.
    \]
     Since $\Tilde{\alpha}=\langle \chi,\alpha^\vee\rangle^{-1}\chi$, the smallest $y_1\in \RR^\times_{>0}$ such that
    \[
    \sigma\otimes \chi_{iy_1\Tilde{\alpha}}\isom \sigma
    \]
    (recall that $\chi_{iy_1\Tilde{\alpha}}$ is the image of $iy_1\Tilde{\alpha}$ under $\fa_{\bM,\CC}^{\bG*}\twoheadrightarrow X_0^{\ur}(M)^G$) is then
    \[
    y_1=\langle \chi,\alpha^\vee\rangle\left(\frac{2\pi }{\log q}\right)\cdot \frac{1}{lt}.
    \]

    To evaluate $\vol(\cO_0^G)$, let
    \[
    \res:X^*(\mathbf{M})^{\mathbf{G}}\rightarrow X^*(\mathbf{A}_{\mathbf{M}})^{\mathbf{G}}
    \]
    be the restriction morphism. Pick the basis element $\eta\in X^*(\mathbf{A}_{\mathbf{M}})^{\mathbf{G}}\isom \ZZ$ such that 
    \[
    \res(\chi)=\eta^m
    \]
    for some $m\in \ZZ_{\geq 1}$. Now 
    \[
    X^{\ur}_0(A_{\mathbf{M}})^G=\{|\eta|^{\left(\frac{2\pi i}{\log q}\right)s}\mid  s\in [0,1)\}
    \]
    and we already have
     \[
    X^{\ur}_0(M)^G=\{|\chi|^{\left(\frac{2\pi i}{\log q}\right)s}\mid s\in [0,\frac{1}{l})\}.
    \]
    The restriction morphism
    \[
    \res: X_0^{\ur}(M)^G\rightarrow X_0^{\ur}(A_{\mathbf{M}})^G,
    \]
    under these coordinates, maps $s$ to $ms$. This map preserves the measures locally, and $X_0^{\ur}(A_{\mathbf{M}})^G$ has volume $1$. Thus we have
    \[
    \vol(X_0^{\ur}(M)^G)=\frac{m}{l},~\vol(\cO_0^G)=\frac{m}{lt}.
    \]

    Finally we get
    \[
    \frac{\vol(\cO_0^G)}{y_1}=\left(\frac{\log q}{2\pi}\right)\cdot \frac{m}{\langle \chi,\alpha^\vee\rangle}.
    \]

    Recall that we've fixed a special maximal compact subgroup $K_G$ of $G$ at the beginning of this section, and for any closed subgroup $H$ of $G$, $H$ is equipped with the measure such that $K_H:=H\cap K_G$ has volume one. We denote by $\deg(\cdot)$ the formal degree with respect to this choice of the measure. Then by Heiermann's formula in \cite[\textsection 8.6]{heiermann-spectrales}, we obtain the following 
\begin{theorem}\label{Heiermann's formula}
\begin{align*}
    \frac{\deg(\pi)}{\deg(\sigma)}&=\gamma(\mathbf{G}/\mathbf{M})\cdot |\mathrm{Stab}(A)|^{-1}\cdot \frac{\vol(\cO_0^G)}{y_1}\cdot 2\pi \cdot \underset{s=1/j}{\Res}\mu(\sigma\otimes \chi_{s\Tilde{\alpha}})\\
    &=\gamma(\mathbf{G}/\mathbf{M})\cdot \log q\cdot \frac{m}{\langle \chi,\alpha^\vee\rangle } \cdot \underset{s=1/j}{\Res}\mu(\sigma\otimes \chi_{s\Tilde{\alpha}})
\end{align*}
 where 
    \begin{itemize}
        \item $\chi\in X^*(\mathbf{M})^{\mathbf{G}}\isom \ZZ$ is the generator with $\langle \chi,\alpha^\vee\rangle>0$;
        \item $m$ is the index of $\res(X^*(\mathbf{M})^{\mathbf{G}})$ in $X^*(\mathbf{A}_{\mathbf{M}})^{\mathbf{G}}$.
    \end{itemize}
    
\end{theorem}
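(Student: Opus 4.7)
The plan is to obtain Theorem \ref{Heiermann's formula} as a direct packaging of Heiermann's general residue formula from \cite[\S 8.6]{heiermann-spectrales} with the ingredients that have been accumulated in this section. Heiermann's formula, applied to a generic discrete series $\pi$ which is the unique generic subquotient of $i_P^G \sigma_{\Tilde{\alpha}/j}$ from a maximal parabolic, expresses $\deg(\pi)/\deg(\sigma)$ as
\[
\gamma(\bG/\bM)\cdot |\mathrm{Stab}(A)|^{-1}\cdot \bigl(\text{residue contribution from poles of } \mu \text{ on } \RR_{>0}\Tilde{\alpha}\bigr).
\]
So the task splits into (i) identifying $|\mathrm{Stab}(A)|$, (ii) extracting the residue, and (iii) converting it into the stated explicit form.

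For (i), I would simply invoke Corollary 4.4 proved above, giving $|\mathrm{Stab}(A)|=1$ and eliminating that factor. For (ii), the residue contribution arises by shifting the contour of integration of $\mu(\sigma\otimes \chi_{s\Tilde{\alpha}})$ from the imaginary axis $\mathrm{Re}(s)=0$ to a line $\mathrm{Re}(s)=r\gg 0$, and comparing the two via the residue theorem. By Silberger's classical result \cite[p.~572]{silberger}, the only pole of $\mu$ on the positive real line $\RR_{>0}\Tilde{\alpha}$ is at $s=1/j$ (which is exactly the reducibility point predicted by Shahidi's classification), so only a single residue term survives. This yields the factor $2\pi\cdot \mathrm{Res}_{s=1/j}\mu(\sigma\otimes \chi_{s\Tilde{\alpha}})$.

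The main — and genuinely delicate — step will be (iii), the normalization of measures on $\cO_0^G$. Heiermann's formula is stated with respect to the standard Haar measure on the unramified orbit, normalized through the restriction $X_0^{\ur}(M)^G\to X_0^{\ur}(A_{\bM})^G$ and the requirement $\vol(X_0^{\ur}(A_{\bM})^G)=1$, whereas the residue theorem naturally outputs the answer in the complex coordinate along $\Tilde{\alpha}$. The needed Jacobian is the quotient $\vol(\cO_0^G)/y_0$, where $y_0$ is the $\Tilde{\alpha}$-period of the orbit. This is precisely the bookkeeping carried out in the paragraphs above, unwinding three integers: the level $l$ with $|\chi(M)|=q^{l\ZZ}$, the stabilizer size $t=|\mathrm{Stab}_M^G(\sigma)|$, and the index $m$ of $\res(X^*(\bM)^{\bG})$ in $X^*(\bA_{\bM})^{\bG}$. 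The computation shows $y_0 = \langle\chi,\alpha^\vee\rangle\cdot\frac{2\pi}{lt\log q}$ and $\vol(\cO_0^G)=m/(lt)$, in which the $lt$ cancels to yield
\[
\frac{\vol(\cO_0^G)}{y_0} \;=\; \frac{\log q}{2\pi}\cdot\frac{m}{\langle \chi,\alpha^\vee\rangle}.
\]

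Finally, I would assemble all pieces: multiplying $\gamma(\bG/\bM)\cdot 1\cdot \frac{\log q}{2\pi}\cdot\frac{m}{\langle \chi,\alpha^\vee\rangle}\cdot 2\pi$ with the surviving residue $\mathrm{Res}_{s=1/j}\mu(\sigma\otimes \chi_{s\Tilde{\alpha}})$ gives the claimed formula, with the $2\pi$ cancelling cleanly. The only subtlety requiring care is keeping the two normalizations of $\mu$ consistent (Heiermann's convention versus Shahidi's; these differ by a factor of $\gamma(\bG/\bM)$ in the maximal case, as flagged in \S\ref{subsection shahidi}), so that the $\gamma(\bG/\bM)$ appearing in the final formula is unambiguously the one coming from Heiermann's formula and not double-counted from the $\mu$-function itself.
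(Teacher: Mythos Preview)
Your proposal is correct and follows essentially the same route as the paper: the argument there is precisely the accumulation of the ingredients in \S\ref{subsection Heiermann's formula} (Corollary for $|\mathrm{Stab}(A)|=1$, Silberger's identification of the unique pole, and the explicit computation of $\vol(\cO_0^G)/y_0$ via the constants $l,t,m$), which are then plugged into Heiermann's residue formula from \cite[\S 8.6]{heiermann-spectrales}. Your closing caveat about keeping the two normalizations of $\mu$ straight is exactly the point the paper flags after Theorem \ref{shahidi main theorem}.
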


\begin{remark}
    The definition of 
    \begin{itemize}
        \item the formal degree quotient on the left hand side,
        \item the constant $\gamma(\bG/\bM)$,
        \item and the $\mu$-function
    \end{itemize}   
    all depend on the choice of $K_G$.
\end{remark}

\begin{proof}
   The general formula of \cite[\S 8.6]{heiermann-spectrales} reads
   \[
    \frac{\deg(\pi)}{\deg(\sigma)}=\gamma(\mathbf{G}/\mathbf{M})\cdot |\mathrm{Stab}(A)|^{-1}\cdot m(\pi,\sigma)\cdot (\Res_A^P\mu)(\sigma).
   \]
   
   The multiplicity $m(\pi,\sigma)$ above in our case is one, since $\pi$ is the unique generic subquotient of $i_P^G\sigma_{\Tilde{\alpha}/j}$ (cf. (3) of Theorem \ref{shahidi classification}). The only point that requires an explanation is the notation $(\Res_A^P\mu)(\sigma)$ of Heiermann. 
   
   This is a multiple residue operator defined in a more general setting for arbitrary $\bP$, not necessarily maximal. The general definition is quite tough, but in our case, since $\bP$ is maximal (i.e. of corank one), up to constant this operator is exactly $\Res_{s=1/j}\mu(\sigma\otimes\chi_{s\Tilde{\alpha}})$, and by \cite[\S4.11]{heiermann-spectrales} (we take $\psi$ there to be $\mu$, and as we mentioned before, $\Tilde{\alpha}/j\in \CC\cdot \Tilde{\alpha}$ is the unique pole of $\mu$ on the right half plane) it satisfies
   \[
    \int_{\cO_0^G} \mu(\sigma\otimes \chi_{r\Tilde{\alpha}})d\sigma=\int_{\cO_0^G} \mu(\sigma)d\sigma+(\Res_A^P\mu)(\sigma).\quad (r\gg 0)
   \]
Thus by our computation \eqref{residue operator} above, it is exactly 
\[
(\Res_A^P\mu)(\sigma)=\frac{\vol(\cO_0^G)}{y_1}\cdot 2\pi \cdot \underset{s=1/j}{\Res}\mu(\sigma\otimes \chi_{s\Tilde{\alpha}}),
\]
hence the result.
\end{proof}

We remark that the notation ``$\Tilde{\alpha}$" introduced in \cite[\S3.2]{heiermann-spectrales} is not the fundamental weight (following the convention of Shahidi) corresponding to $\alpha$ in our case. Since it is used to define the multiple residue operator in general, one has to be careful while applying the formula to more general cases.

\section{Parameters}\label{section parameters}
In this section, we summarize Heiermann's construction of discrete parameters and compute the adjoint $\gamma$-factors for discrete parameters assuming this construction.
\subsection{Conditional Construction of Discrete Parameters}\label{subsection construction of discrete parameters}
Only in this subsection, we let $\bG$ be a general connected reductive group over $F$ with a fixed maximal split torus $\bT_0$. We then fix a semi-standard parabolic subgroup $\mathbf{P}=\mathbf{M}\mathbf{N}$ of $\mathbf{G}$, i.e. $\bM\supset \bT_0$. Let $\pi$ (resp. $\sigma$) be an irreducible smooth representation of $G$ (resp. $M$). Suppose $\pi$ is a subquotient of $i_P^G\sigma_\lambda$ for some $\lambda\in\fa_{\mathbf{M}}^*$. 

Assuming the LLC for $\mathbf{G}$ (resp. $\mathbf{M}$), which is conjectural in general, let $\varphi_\pi:\WD_F\rightarrow {^LG}$ (resp. $\varphi_\sigma:\WD_F\rightarrow {^L\!M}$) be the L-parameter of $\pi$ (resp. $\sigma$). Let ${^L\!P}$ be a (relevant) parabolic subgroup of ${^LG}$ corresponding to $\bP$ with inclusion $\iota_P:{^L\!P}\hookrightarrow {^LG}$; these are unique up to $\widehat{G}$-conjugacy. The conjectural LLC includes the following properties:
\begin{itemize}
    \item Suppose $\pi$ is non-tempered. According to the Langlands classification, $\pi$ is the Langlands quotient of $i_P^G \sigma_{\lambda}$ for some tempered $\sigma$. Then
    \begin{equation}\label{non-tempered parameters}
        \varphi_{\pi}=\iota_P\circ \varphi_{\sigma_\lambda}.
    \end{equation}
    \item Suppose $\pi$ is tempered. According to the classification of tempered representations, $\pi$ is a subrepresentation of $i_P^G\sigma$ for some discrete series $\sigma$. Then
    \begin{equation}\label{tempered parameters}
        \varphi_{\pi}=\iota_P\circ \varphi_{\sigma}.
    \end{equation}
\end{itemize}

When $\pi$ is a discrete series and $\sigma$ is unitary supercuspidal, it is a natural question to ask how to construct $\varphi_\pi$ from $\varphi_\sigma$. This problem is much subtler than the above two standard cases, which was studied by Heiermann in \cite{heiermann-orbites}\cite{heiermann-unipotent} using tools from local harmonic analysis developed by himeself. We now summarize his results as follows. 

Thus in the rest of this section, we shall assume the LLC for irreducible supercuspidal representations of all semi-standard Levi subgroups of $\mathbf{G}$. Let $\sigma$ be unitary supercuspidal, with L-parameter $\varphi_\sigma:\WD_F\rightarrow {^L\!M}$, and assume that $\pi$ is a discrete series subquotient of $i_P^G\sigma_\lambda$ for some $\lambda\in \fa_{\mathbf{M}}^*$. 

Let $T_{^L\!M}$ be the maximal torus of  $Z(\widehat{M})^\Gamma$. Let $s_\lambda\in T_{^L\!M}$ be the element corresponding to $(-\lambda)\in\fa_{\mathbf{M}}^*$ by the LLC for tori (cf. \S \ref{subsection parameters}, especially the final remark), and 
$$s_\sigma:=\varphi_\sigma(1,\begin{bmatrix}
    q^{\frac{1}{2}}&\\&q^{-\frac{1}{2}}
\end{bmatrix}).$$
Define
\begin{equation}\label{s-sigma-lambda}
    s_{\sigma,\lambda}:=s_\sigma s_\lambda\in {^L\!M}.
\end{equation}
Let $\widehat{M}^\sigma:=Z_{\widehat{G}}(\varphi_\sigma(\W_F))$; this is a complex reductive Lie group (cf. \cite[Proposition 5.2]{heiermann-orbites}) but not necessarily connected in general. We denote by $\widehat{M}^\sigma_{\der}$ its derived subgroup. 

Under some natural assumptions (cf. \cite[\textsection 5.2, 5.3]{heiermann-orbites}) on irreducible supercuspidal representations of Levi subgroups, Heiermann proved that there exists a nilpotent element $N_{\sigma,\lambda}\in \Lie(\widehat{M}^\sigma_{\der})$ (not unique in general), such that 
\begin{enumerate}
    \item $\Ad(s_{\sigma,\lambda})N_{\sigma,\lambda}=qN_{\sigma,\lambda}$;
    \item a torus in ${^LG}$ that centralizes $s_{\sigma,\lambda}$ and $N_{\sigma,\lambda}$ simultaneouly is contained in $Z(\widehat{M}^\sigma_{\der})$,
\end{enumerate}
i.e. $(s_{\sigma,\lambda},N_{\sigma,\lambda})$ is an $L^2$-pair (for more details of $L^2$-pair, cf. \cite[\S 3.1]{heiermann-orbites}\cite{lusztig}). It was proved that such an $L^2$-pair determines an algebraic homomorphism $\psi_{\sigma,\lambda}:\SL_2(\CC)\rightarrow \widehat{M}^\sigma$, with
$$\psi_{\sigma,\lambda}(\begin{bmatrix}
1 & 1 \\
0 & 1
\end{bmatrix})=\exp (N_{\sigma,\lambda}), ~\psi_{\sigma, \lambda}(\begin{bmatrix}
q^{1 / 2} & 0 \\
0 & q^{-1 / 2}
\end{bmatrix})=s_{\sigma, \lambda}.
$$

Based on these results, Heiermann associated to $\pi$ an L-parameter $\varphi_\pi:\WD_F\rightarrow {^LG}$ defined by
\begin{equation}
    \label{sigma to pi}\varphi_\pi(w,h):=\varphi_\sigma(w,1)\psi_{\sigma,\lambda}(h).
\end{equation}
This construction maps a discrete series representation of $G$ to a discrete admissible homomorphism $\varphi_{\pi}:\WD_F\rightarrow {^LG}$. Under some natural assumptions, the above construction could be extended to a well-defined map (but not uniquely determined in general)
$$\mathrm{LL}^{\bG}:\Pi(\bG)\rightarrow \Phi(\bG)$$
using \eqref{non-tempered parameters} and \eqref{tempered parameters}.

\begin{theorem}[Heiermann, \cite{heiermann-orbites}]\label{Heiermann construction of discrete parameters}
     Under a series of natural hypotheses of the LLC and L-packets for $\bG$ and its semi-standard Levi subgroups (for precise statements, cf. \cite[\textsection 6.5]{heiermann-orbites}), the map $\mathrm{LL}^{\bG}$ constructed above does induce a bijection between the set of L-packets of $\bG$ and $\Phi(\bG)$.
\end{theorem}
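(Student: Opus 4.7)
The plan is to show bijectivity by first reducing to the discrete series case via the Langlands classification, and then exploiting Heiermann's detailed dictionary between Harish-Chandra $\mu$-function data on the representation side and $L^2$-pairs on the parameter side.

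First I would use the Langlands classification to reduce to discrete series. For non-tempered $\pi$ realized as the Langlands quotient of $i_P^G \sigma_\lambda$, prescription \eqref{non-tempered parameters}, together with the parallel parametrization of non-discrete parameters by relevant $L$-parabolic subgroups of ${^LG}$, immediately transfers bijectivity from the tempered case. For tempered $\pi$ appearing in $i_P^G\sigma$ with $\sigma$ a discrete series of a proper Levi, prescription \eqref{tempered parameters} combined with the expected matching of $R$-groups on the representation side with the component groups $\mathcal{S}_{\varphi_\pi}^\natural$ on the parameter side (an input from the natural hypotheses of \cite[\textsection 6.5]{heiermann-orbites}) reduces matters to the discrete series case on a smaller Levi subgroup $\bM$.

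Next I would tackle the discrete series case component-by-component in the Bernstein decomposition. Fix an inertial class with supercuspidal representative $(\bM, \sigma)$; both sides then admit parallel descriptions. On the representation side, discrete series in this component are indexed by the reducibility points $\lambda \in \fa_{\bM}^{\bG *}$ at which $i_P^G \sigma_\lambda$ has a discrete subquotient, together with a choice of such a subquotient; these reducibility points are precisely the poles of the $\mu$-function analyzed by Silberger, Shahidi and Heiermann. On the parameter side, discrete parameters extending $\varphi_\sigma$ correspond via the Jacobson--Morozov theorem and Lusztig's classification of $L^2$-pairs to $\hat{M}^\sigma$-conjugacy classes of pairs $(s_{\sigma,\lambda}, N_{\sigma,\lambda})$, with the finer L-packet structure encoded by $\pi_0(Z_{\hat{G}}(\varphi_\pi))$. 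The heart of the argument is the explicit identification: the reducibility point $\lambda$ determines $s_{\sigma,\lambda}$ through the LLC for tori, and the eigenspace decomposition of $\Ad(s_{\sigma,\lambda})$ on $\Lie(\hat{M}^\sigma_{\der})$ then produces a nilpotent $N_{\sigma,\lambda}$ whose existence and uniqueness (up to scalar) is governed by the order of the pole of $\mu$ at the reducibility exponent. Granting this dictionary, bijectivity follows because both sides are cut out by the same combinatorial data.

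The principal obstacle will be establishing well-definedness and independence of choices. The pair $(s_{\sigma,\lambda}, N_{\sigma,\lambda})$ depends a priori on the chosen representative $(\bM, \sigma)$ of an inertial class and on the chosen reducibility exponent $\lambda$; one must verify equivariance under twisting by $X^{\ur}(M)^G$, under the action of $W(\bM)$, and under replacement of $\bM$ by a $\bG$-conjugate Levi, so that the resulting $\varphi_\pi$ is well-defined up to $\hat{G}$-conjugacy. Intertwined with this is the requirement to verify the second condition in the definition of an $L^2$-pair, which is what guarantees that $\varphi_\pi$ is actually a \emph{discrete} parameter rather than merely admissible. These technical points are precisely what the ``natural assumptions'' invoked in the statement are designed to package, and discharging them would require the full machinery of Heiermann's spectral analysis.
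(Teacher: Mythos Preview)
The paper does not give its own proof of this theorem: it is stated as a result of Heiermann with a citation to \cite{heiermann-orbites}, followed only by a brief remark that the main input is the characterization of the cuspidal support of a discrete series by the condition that the Harish-Chandra $\mu$-function have a pole of maximal order there. Your outline is consistent with that remark and with Heiermann's actual strategy---the reduction to discrete series via the Langlands classification, the dictionary between reducibility data for $i_P^G\sigma_\lambda$ and $L^2$-pairs $(s_{\sigma,\lambda},N_{\sigma,\lambda})$, and the role of the hypotheses in ensuring well-definedness---so there is nothing substantive to correct; just be aware that the paper itself treats this as a black box and explicitly declines to reproduce the argument.
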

\begin{remark}The main input of the proof of this theorem is a characterization of the cuspidal support of discrete series via harmonic analysis. It claims that an irreducible supercuspidal representation of $M$ is the cuspidal support of some discrete series $\pi$ of $G$ if and only if the $\mu$-function has a pole of maximal order at $\sigma$ (cf. \cite{heiermann-spectrales}).

    For generic representations of a quasi-split group, the compatibility of certain local factors  associated to the above construction (namely $L(s,r_i\circ \varphi_{\pi})$, $\varepsilon(s,r_i\circ \varphi_{\pi},\psi)$, $\gamma(s,r_i\circ \varphi_{\pi},\psi)$ where $r_i$ as defined in \textsection \ref{subsection shahidi}) with Langlands-Shahidi local factors was studied in \cite{heiermann-unipotent}.
\end{remark}

This result to some extent characterizes what a ``reasonable" construction of discrete parameters (from supercuspidal ones) ``should" be, while the precise statements of those hypotheses in the theorem are quite long and technical. However, we shall not make any use of these precise statements in the sequel (thus omitted here). 

What we want to point out is: it is the properties $$\varphi_\pi(w,1)=\iota_{P}\circ\varphi_\sigma(w,1)\text{ for }w\in \W_F\quad \text{and}\quad\varphi_\pi(\begin{bmatrix}
    q^{1/2} & \\
    & q^{-1/2}
\end{bmatrix})=s_{\sigma,\lambda}$$ of $\varphi_{\pi}$ that will serve as the input for our computation of the adjoint $\gamma$-factors. Thus in the main theorem, this will be one of our ``working hypotheses" for LLC (which is not accessible for general groups).

\subsection{The Adjoint $\gamma$-Factors}\label{subsection adjoint gamma factor} Now we go back to the setting at the beginning of \textsection4, so that $\bG$ is a quasi-split group with associated data as explained before. We set
\begin{itemize}
    \item $\mathbf{P}=\mathbf{M}\mathbf{N}$ be the standard maximal parabolic subgroup corresponding to $\Delta-\{\alpha\}$;
    \item $\sigma$ an irreducible unitary generic supercuspidal representation of $M$;
    \item $\pi$ a discrete series representation of $G$ which is a subrepresentation of $i_P^G\sigma_\lambda$ for $\lambda=s_0\Tilde{\alpha}$, $s_0\in \{1,1/2\}$ (cf. \S\ref{subsection shahidi}); here we write $s_0$ rather than $1/j$ to make the ideas and details below clearer (since the exact number $1/j$ is not crucial here);
    \item $\varphi_{\sigma}$ (resp. $\varphi_{\pi}$) the L-parameter of $\sigma$ (resp. $\pi$), assuming the LLC.
\end{itemize} 
Of course, the LLC is inaccessible in general, so we need some ``working hypotheses" in such a general setting. It is a well-known folklore conjecture that generic supercuspidal representations should correspond to $\SL_2$-trivial discrete parameters (cf. \cite[pp. 103]{heiermann-unipotent} or more formally, Conjecture 7.1 of \cite{gross-reeder}). Thus we assume further that 
\begin{itemize}
\item $\varphi_{\sigma}|_{\SL_2(\CC)}$ is trivial, 
    \item $$\varphi_\pi(w,1)=\iota_{P}\circ\varphi_\sigma(w,1)\text{ for }w\in \W_F\quad \text{and}\quad\varphi_\pi(\begin{bmatrix}
    q^{1/2} & \\
    & q^{-1/2}
\end{bmatrix})=s_{\sigma,\lambda},$$
where $s_{\sigma,\lambda}$ is defined in \eqref{s-sigma-lambda}.
\end{itemize}

With these working hypotheses for LLC, we could now compute the adjoint $\gamma$-factor $\gamma(s,\pi,\Ad,\psi)$. Recall that $\W_F$ is equipped with an absolute value $\|\cdot \|$ (cf. \S\ref{subsection parameters}). For our computations, we need the following lemma, which is well-known to experts:
\begin{lemma}
    Let $V$ be a finite dimensional vector space over $\CC$ and $\varphi:\WD_F\rightarrow \GL(V)$ be an admissible homomorphism. Define 
    $\varphi':\WD_F\rightarrow \GL(V)$ by
    $$\varphi'(w,h):=\varphi(w,\begin{bmatrix}
        \|w\|^{1/2} &\\
        & \|w\|^{-1/2}
    \end{bmatrix}).$$
    Then 
    $$\gamma(s,\varphi,\psi)=\gamma(s,\varphi',\psi).$$
    \end{lemma}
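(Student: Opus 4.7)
My plan is to prove this via the classical invariance of the $\gamma$-factor of a Weil-Deligne representation under its monodromy operator. Under the standard dictionary, an admissible homomorphism $\varphi:\WD_F \to \GL(V)$ corresponds to a Weil-Deligne representation $(\rho, N)$ on $V$, where $\rho(w) := \varphi(w, \operatorname{diag}(\|w\|^{1/2}, \|w\|^{-1/2}))$ and $N := d\varphi|_{\SL_2(\CC)}\bigl(\begin{smallmatrix}0 & 1 \\ 0 & 0\end{smallmatrix}\bigr)$; the relation $\rho(w) N \rho(w)^{-1} = \|w\| N$ is built into the twist by $\operatorname{diag}(\|w\|^{1/2}, \|w\|^{-1/2})$. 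By convention, the local factors $L, \varepsilon, \gamma$ of $\varphi$ are those of $(\rho, N)$. Under this correspondence, the homomorphism $\varphi'$ in the lemma ignores $h$ entirely and hence corresponds to the same Weil part $\rho$ with $N$ replaced by $0$. The lemma therefore asserts $\gamma(s, (\rho, N), \psi) = \gamma(s, (\rho, 0), \psi)$.

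Next I would reduce to $V$ irreducible as a $\WD_F$-representation, using additivity of $L$, $\varepsilon$, $\gamma$ on direct sums. By the standard classification, $V \cong \rho_0 \otimes \Sym^{n-1}(\CC^2)$ for some irreducible Weil representation $\rho_0$ and integer $n \geq 1$, with $N$ the standard upper-triangular nilpotent on the $\SL_2$-factor. For such $V$, the space $V^{I_F} \cap \ker N$ is $\rho_0^{I_F}$ placed at the highest $\SL_2$-weight $(n-1)/2$, giving $L(s, (\rho, N)) = L(s + (n-1)/2, \rho_0)$ and dually $L(1-s, (\rho, N)^{\vee}) = L((n+1)/2 - s, \rho_0^{\vee})$; while the Weil part $\rho|_{\W_F}$ decomposes weight by weight as $\bigoplus_{i=0}^{n-1} \rho_0 \otimes |\cdot|^{(n-1)/2 - i}$, so that $\gamma(s, \varphi', \psi) = \prod_{i=0}^{n-1} \gamma(s + (n-1)/2 - i, \rho_0, \psi)$.

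Finally, combining these with Deligne's formula $\varepsilon(s, (\rho, N), \psi) = \varepsilon(s, \rho, \psi) \cdot \det\bigl(-\Frob \, q^{-s} \mid V^{I_F}/(V^{I_F} \cap \ker N)\bigr)$ and the multiplicativity $\varepsilon(s, \rho, \psi) = \prod_{i=0}^{n-1} \varepsilon(s + (n-1)/2 - i, \rho_0, \psi)$, a short telescoping calculation (invoking the functional equation of each Weil $\gamma$-factor $\gamma(s, \rho_0, \psi) = \varepsilon(s, \rho_0, \psi) L(1-s, \rho_0^\vee)/L(s, \rho_0)$) yields $\gamma(s, \varphi, \psi) = \gamma(s, \varphi', \psi)$. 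The hardest part will be the careful bookkeeping of Deligne's $\varepsilon$-factor correction and its cancellation against the ``missing $L$-factor'' contributions coming from $V^{I_F}/(V^{I_F} \cap \ker N)$; however, this is entirely classical Weil-Deligne formalism and involves no new idea, so the invariance may alternatively be cited directly from standard references such as Tate's Corvallis article or Deligne's original treatment.
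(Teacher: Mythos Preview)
Your argument is correct and is essentially the standard proof: translate to Weil--Deligne representations, observe that $\varphi$ and $\varphi'$ correspond to $(\rho,N)$ and $(\rho,0)$ with the same underlying $\rho$, and then verify that Deligne's $\varepsilon$-correction exactly cancels the change in the $L$-factors so that the $\gamma$-factor depends only on $\rho$. The reduction to irreducibles via direct sums is legitimate here because the paper's notion of ``admissible'' requires $\varphi(\Frob)$ to be semisimple, so the associated Weil--Deligne representation is Frobenius-semisimple and does split as a sum of $\rho_0\otimes\Sym^{n-1}$'s.

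The paper itself does not give a proof at all: it simply records the lemma as ``well-known to experts'' and cites \cite[\S3.2]{heiermann-unipotent} and \cite[\S3.4]{shahidi-plancherel}. Your write-up is precisely the kind of computation those references contain, so there is no genuine divergence of method---you have just made explicit what the paper leaves to the literature. If anything, your closing remark that the invariance ``may alternatively be cited directly from standard references such as Tate's Corvallis article or Deligne's original treatment'' is exactly what the paper does.
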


    For a proof, see \cite[\textsection 3.2]{heiermann-unipotent}, and \cite[\textsection 3.4]{shahidi-plancherel}. In particular, we have the following

   \begin{corollary}
      Let $\pi$ be an irreducible smooth representation of $G$ with L-parameter $\varphi_\pi:\WD_F\rightarrow {^LG}$. Define $\varphi_\pi':\WD_F\rightarrow {^LG}$ 
by
$$\varphi_\pi'(w,h):=\varphi(w,\begin{bmatrix}
        \|w\|^{1/2} &\\
        & \|w\|^{-1/2}
    \end{bmatrix}),$$
    then for any finite dimensional  representation $r$ of $^LG$,
$$\gamma(s,\pi,r,\psi)=\gamma(s,r\circ\varphi_\pi',\psi).$$
   \end{corollary}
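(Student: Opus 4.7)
The plan is to deduce this corollary directly from the preceding lemma by taking $V$ to be the representation space of $r$ and $\varphi := r \circ \varphi_\pi$. By the definition of the local factors attached to an irreducible smooth representation via its L-parameter (recalled at the end of \S\ref{subsection parameters}), we have
\[
\gamma(s,\pi,r,\psi) \;=\; \gamma(s, r\circ\varphi_\pi, \psi).
\]
The lemma, applied to the admissible homomorphism $r\circ\varphi_\pi:\WD_F\to\GL(V)$, then gives
\[
\gamma(s, r\circ\varphi_\pi, \psi) \;=\; \gamma\bigl(s, (r\circ\varphi_\pi)', \psi\bigr).
\]

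The only thing that requires a (trivial) check is that the two natural constructions of ``primed'' parameter agree, i.e. that $(r\circ\varphi_\pi)' = r\circ\varphi_\pi'$ as homomorphisms $\WD_F\to\GL(V)$. This is immediate from the definition: for any $(w,h)\in\WD_F$,
\[
(r\circ\varphi_\pi)'(w,h) \;=\; (r\circ\varphi_\pi)\!\left(w,\begin{bmatrix}\|w\|^{1/2}&\\&\|w\|^{-1/2}\end{bmatrix}\right) \;=\; r\bigl(\varphi_\pi'(w,h)\bigr),
\]
where the second equality uses the definition of $\varphi_\pi'$ and the fact that the value does not depend on $h$.

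There is no real obstacle here, since the corollary is essentially a packaging of the lemma with the definition of $\gamma(s,\pi,r,\psi)$; the only subtle point, already handled by the lemma, is that replacing the $\SL_2(\CC)$-argument by the formula involving $\|w\|^{\pm 1/2}$ does not change the $\gamma$-factor. Intuitively, this reflects the standard principle that passing between the Weil--Deligne (nilpotent) and the Weil (Frobenius-semisimple) description of a parameter preserves local constants, and this is exactly the content we import from \cite[\S3.2]{heiermann-unipotent} and \cite[\S3.4]{shahidi-plancherel} via the lemma.
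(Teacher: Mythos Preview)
Your proof is correct and is exactly the intended argument: the paper states the corollary with ``In particular'' immediately after the lemma, without further proof, and your unpacking (composing with $r$ and checking $(r\circ\varphi_\pi)'=r\circ\varphi_\pi'$) is precisely the routine verification the phrase ``in particular'' is meant to encode.
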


Now let $\widehat{\lieg}$ be the Lie algebra of $\widehat{G}$ and $\Ad$ be the adjoint representation of $^LG$ on $\widehat{\lieg}_{\ad}:=\widehat{\mathfrak{g}}/Z(\widehat{\mathfrak{g}})^\Gamma$ (not $\widehat{\lieg}/Z(\widehat{\lieg})$). We consider the composite
$$\W_F\xrightarrow{\varphi_\pi'}{^LG}\xrightarrow{\Ad}\GL(\widehat{\lieg}_{\ad}).$$
Since
\begin{align*}   \varphi_\pi'(w):=&\varphi_\pi(w,\begin{bmatrix}
        \|w\|^{1/2} &\\
        & \|w\|^{-1/2}
    \end{bmatrix} )\\
    =&\varphi_\sigma(w)\varphi_\pi(\begin{bmatrix}
        q^{1/2}&\\
        & q^{-1/2}
    \end{bmatrix})^{-v(w)}\\
    =&\varphi_{\sigma}(w)s_{\sigma,\lambda}^{-v(w)}\in {^L\!M}\subset {^LG},
\end{align*}
we first decompose the restriction of $\Ad$ onto ${^L\!M}$. Since $\widehat{\mathfrak{g}}=\widehat{\liem}\oplus \widehat{\lien}\oplus \widehat{ \overline{\lien}}$ and $Z(\widehat{\mathfrak{g}})^\Gamma\subset Z(\widehat{\liem})^\Gamma$, the adjoint action of ${^L\!M}$ on $\widehat{\lieg}_{\ad}$ can be decomposed as 
$$\widehat{\lieg}_{\ad}=\widehat{\liem}/Z(\widehat{\lieg})^\Gamma\oplus\widehat{\lien}\oplus\widehat{\overline{ \lien}}$$
as an $^L\!M$-module. Since $\mathbf{P}=\mathbf{M}\mathbf{N}$ is a standard maximal parabolic subgroup, ${^L\!P}$ is relevant, maximal,  $Z(\widehat{\liem})^\Gamma/Z(\widehat{\mathfrak{g}})^{\Gamma}$ is then one dimensional and isomorphic to the trivial representation $\mathbbm{1}$ of $^L\!M$. 

Recall by the Langlands-Shahidi theory, the adjoint representation of $^L\!M$ on $\widehat{\lien}$, denoted by $r$, can be decomposed as $(r,\widehat{\lien})=\bigoplus_{i=1}^m(r_i,V_i)$, where
\[V_i:=\{X_{\beta^\vee}\in \widehat{\lien}\mid \beta\in \Sigma, \langle \Tilde{\alpha},\beta^\vee\rangle=i\},\quad 1\leq i\leq m,\]
and each $(r_i,V_i)$ is irreducible. Similarly, $(\Tilde{r},\widehat{\overline{\lien}})=\bigoplus_{i=1}^m(\Tilde{r}_i,\overline{V}_i)$, where $\Tilde{r}$ (resp. $\Tilde{r}_i$) is the contragredient of $r$ (resp. $r_i$), and 
\[\overline{V}_i:=\{X_{\beta^\vee}\in \widehat{\overline{\lien}}\mid \beta\in \Sigma, \langle \Tilde{\alpha},\beta^\vee\rangle=(-i)\},\quad 1\leq i\leq m.\]
(Recall that $\Tilde{\alpha}:=\langle \rho_{\mathbf{P}},\alpha^\vee\rangle^{-1}\rho_{\mathbf{P}}\in\fa_{\mathbf{M}}^*$.)

So as a representation of $\W_F$, 
$$\Ad\circ \varphi_\pi'\isom \mathbbm{1}\oplus (\Ad_M\circ \varphi_\pi')\oplus \bigoplus_{i=1}^m[(r_i\circ \varphi_\pi')\oplus (\Tilde{r}_i\circ \varphi_\pi')],$$
where we denote by $\Ad_M$ the adjoint representation of ${^L\!M}$ on $\widehat{\liem}/Z(\widehat{\liem})^\Gamma$. By definition of $s_\lambda\in T_{^L\!M}$, it corresponds to $\lambda=-s_0\Tilde{\alpha}\in \fa_{\mathbf{M}}^*$ by the LLC for tori (cf. \S\ref{subsection parameters}). By our assumption $$s_\sigma=\varphi_{\sigma}(1,\begin{bmatrix}
    q^{1/2} & \\
    & q^{-1/2}
\end{bmatrix})\in \varphi_{\sigma}(\SL_2(\CC))$$ is trivial, thus $s_{\sigma,\lambda}:=s_{\lambda}s_{\sigma}=s_{\lambda}\in Z( {^L\!M})$ acts trivially on $\widehat{\liem}$, $\Ad_M\circ \varphi_\pi'\isom \Ad_M\circ \varphi_\sigma$ and
$$\gamma(s,\Ad_M\circ\varphi_\pi',\psi)=\gamma(s,\Ad_M\circ \varphi_\sigma,\psi)=\gamma(s,\sigma,\Ad,\psi).$$
(As we can see, here we need the $\SL_2$-trivial assumption for a generic supercuspidal parameter, otherwise the decomposition would be much more delicate.)

Since
$$V_i=\{X_{\beta^\vee}\in \widehat{\lien}\mid \beta\in\Sigma, \langle \Tilde{\alpha},\beta^\vee\rangle=i\},$$
we have
$$\Ad(s_\lambda)(v_i)=q^{-\langle \lambda,\beta^\vee\rangle }v_i=q^{is_0}v_i,\quad (v_i\in V_i),$$
thus
$$r_i\circ \varphi_\pi'\isom (r_i\circ \varphi_\sigma)\otimes \chi_i,$$
where $\chi_i:\W_F \rightarrow \CC^\times$ is the unramified character defined by $\chi_i(\Frob)=q^{is_0}$, and then
$$\gamma(s,r_i\circ \varphi_\pi',\psi)=\gamma(s+is_0,r_i\circ\varphi_\sigma,\psi)=\gamma(s+is_0,\sigma,r_i,\psi).$$
Similarly, since $\gamma(s,\sigma,\Tilde{r}_i,\psi)=\gamma(s,\Tilde{\sigma},r_i,\psi)$, we get
$$\gamma(s,\Tilde{r}_i\circ \varphi_\pi',\psi)=\gamma(s-is_0,\sigma,\Tilde{r}_i,\psi)=\gamma(s-is_0,\Tilde{\sigma},r_i,\psi).$$
Finally, 
$$\gamma(s,\mathbbm{1},\psi)=\frac{1-q^{-s}}{1-q^{s-1}}.$$

From these computations, we obtain the following
\begin{theorem} Following the setting at the beginning of this subsection,
    $$\gamma(s,\pi,\Ad,\psi)=\left(\frac{1-q^{-s}}{1-q^{s-1}}\right)\cdot \gamma(s,\sigma,\Ad,\psi)\cdot \prod_{i=1}^m\gamma(s+is_0,\sigma,r_i,\psi)\gamma(s-is_0,\Tilde{\sigma},r_i,\psi).$$
\end{theorem}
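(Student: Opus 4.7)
The plan is to exploit the corollary just established, which replaces the Weil--Deligne parameter $\varphi_\pi$ by the Weil-group parameter $\varphi_\pi':\W_F\to {^LG}$ obtained via the $\SL_2(\CC)$-insertion $w\mapsto \diag(\|w\|^{1/2},\|w\|^{-1/2})$, without altering any local $\gamma$-factor attached to a representation of ${^LG}$. Under the working hypotheses $\varphi_\pi|_{\W_F}=\varphi_\sigma|_{\W_F}$ and $\varphi_\pi(\diag(q^{1/2},q^{-1/2}))=s_{\sigma,\lambda}$, the parameter unwinds to $\varphi_\pi'(w)=\varphi_\sigma(w)\,s_{\sigma,\lambda}^{-v(w)}\in {^L\!M}$, so the entire computation reduces to how $\Ad|_{{^L\!M}}$ decomposes and how the central element $s_{\sigma,\lambda}$ twists each constituent.

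Next I would decompose ${^L\lieg}_{\ad}={^L\lieg}/Z({^L\lieg})^\Gamma$ as an ${^L\!M}$-module. From the triangular decomposition ${^L\lieg}={^L\liem}\oplus {^L\lien}\oplus {^L\overline{\lien}}$ and the inclusion $Z({^L\lieg})^\Gamma\subset Z({^L\liem})^\Gamma$, one obtains
\[
{^L\lieg}_{\ad}\;\isom\;\mathbbm{1}\;\oplus\;\bigl({^L\liem}/Z({^L\liem})^\Gamma\bigr)\;\oplus\;{^L\lien}\;\oplus\;{^L\overline{\lien}},
\]
where the trivial summand records the one-dimensional quotient $Z({^L\liem})^\Gamma/Z({^L\lieg})^\Gamma$, one-dimensional precisely because $\bP$ is a maximal parabolic. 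The Langlands--Shahidi refinement then further decomposes ${^L\lien}=\bigoplus_{i=1}^m V_i$ and ${^L\overline{\lien}}=\bigoplus_{i=1}^m \overline V_i$, with ${^L\!M}$ acting by $r_i$ and $\tilde r_i$ respectively.

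The third step is to read off each $\gamma$-factor separately using multiplicativity under direct sums. The $\SL_2(\CC)$-triviality hypothesis on $\varphi_\sigma$ forces $s_\sigma=1$, whence $s_{\sigma,\lambda}=s_\lambda\in Z({^L\!M})$; consequently $s_\lambda$ acts trivially on ${^L\liem}$, yielding $\gamma(s,\Ad_M\circ\varphi_\pi',\psi)=\gamma(s,\sigma,\Ad,\psi)$. On $V_i$, the element $s_\lambda$ acts by the scalar $q^{is_0}$, arising from $s_\lambda$ corresponding to $-\lambda$ under the LLC for tori together with $\langle \tilde\alpha,\beta^\vee\rangle=i$ for $\beta^\vee$ indexing $V_i$; hence $r_i\circ\varphi_\pi'\isom (r_i\circ\varphi_\sigma)\otimes \chi_i$ with $\chi_i(\Frob)=q^{is_0}$, producing the shift $\gamma(s+is_0,\sigma,r_i,\psi)$. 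Dually, $\tilde r_i\circ\varphi_\pi'$ contributes $\gamma(s-is_0,\tilde\sigma,r_i,\psi)$, and the trivial summand contributes $\gamma(s,\mathbbm{1},\psi)=(1-q^{-s})/(1-q^{s-1})$. Multiplying these together yields the stated formula.

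The main delicacy is really just careful bookkeeping of sign conventions: identifying whether $s_\lambda$ corresponds to $\lambda$ or to $-\lambda$ under the LLC for tori (as flagged in \S\ref{subsection parameters}), together with the resulting sign of the exponent in $\chi_i(\Frob)$. Once these normalizations are fixed, the argument is mechanical, resting essentially on the $\SL_2$-triviality of $\varphi_\sigma$ (without which the $\Ad_M$ summand would split into several nontrivially twisted pieces) and on multiplicativity of local factors under direct sums.
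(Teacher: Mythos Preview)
Your proposal is correct and follows essentially the same route as the paper: pass to $\varphi_\pi'$ via the corollary, compute $\varphi_\pi'(w)=\varphi_\sigma(w)s_{\sigma,\lambda}^{-v(w)}\in{^L\!M}$, decompose $\Ad|_{{^L\!M}}$ into $\mathbbm{1}\oplus\Ad_M\oplus\bigoplus_i(r_i\oplus\tilde r_i)$ using maximality of $\bP$ and the Langlands--Shahidi grading, then use $s_{\sigma,\lambda}=s_\lambda\in Z({^L\!M})$ (from $\SL_2$-triviality of $\varphi_\sigma$) to see that the Levi piece is untwisted while each $V_i$ picks up the scalar $q^{is_0}$, and assemble via multiplicativity. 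Your remark that the only genuine subtlety is the sign bookkeeping in the LLC-for-tori normalization is exactly what the paper's computation reflects.
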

As explained in Lemma 1.2 of \cite{hiraga-ichino-ikeda}, $\gamma(0,\pi,\Ad,\psi),\gamma(0,\sigma,\Ad,\psi)$ are holomorphic and nonzero at $s=0$. Since when $s\rightarrow 0$
$$\frac{1-q^{-s}}{1-q^{s-1}}\sim \frac{s\cdot \log q}{1-q^{-1}},$$  we have
\begin{corollary}\label{adjoint gamma factor quotient} Following the setting at the beginning of this subsection,
    $$\frac{\gamma(0,\pi,\Ad,\psi)}{\gamma(0,\sigma,\Ad,\psi)}=\left(\frac{\log q}{1-q^{-1}}\right)\cdot \underset{s=0}{\Res}\prod_{i=1}^m\gamma(s+is_0,\sigma,r_i,\psi)\gamma(s-is_0,\Tilde{\sigma},r_i,\psi).$$
\end{corollary}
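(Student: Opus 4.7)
The plan is to derive this corollary directly from the preceding theorem by dividing both sides by $\gamma(s,\sigma,\Ad,\psi)$ and analyzing the asymptotic behavior at $s=0$. First I would rewrite the theorem as the identity of meromorphic functions
\[
\frac{\gamma(s,\pi,\Ad,\psi)}{\gamma(s,\sigma,\Ad,\psi)}=\left(\frac{1-q^{-s}}{1-q^{s-1}}\right)\cdot \prod_{i=1}^m\gamma(s+is_0,\sigma,r_i,\psi)\gamma(s-is_0,\Tilde{\sigma},r_i,\psi),
\]
so that the task reduces to understanding both sides in a neighborhood of $s=0$.

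Next I would invoke Lemma~1.2 of \cite{hiraga-ichino-ikeda}, which asserts that both $\gamma(0,\pi,\Ad,\psi)$ and $\gamma(0,\sigma,\Ad,\psi)$ are finite and nonzero. Hence the left-hand side is holomorphic and nonvanishing at $s=0$. On the right-hand side, the elementary prefactor admits the Taylor expansion
\[
\frac{1-q^{-s}}{1-q^{s-1}}=\frac{s\log q}{1-q^{-1}}+O(s^2)\quad (s\to 0),
\]
so it contributes a simple zero at $s=0$. Balancing orders of vanishing then forces the product $\prod_{i=1}^m \gamma(s+is_0,\sigma,r_i,\psi)\gamma(s-is_0,\Tilde{\sigma},r_i,\psi)$ to have a simple pole at $s=0$, with residue determined by the identity above.

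Finally, multiplying both sides by $s$ and letting $s\to 0$ converts the simple zero of the elementary factor into the prefactor $\frac{\log q}{1-q^{-1}}$ and extracts the residue of the product, yielding exactly the claimed formula. The argument is essentially formal given the theorem, and the only point one must check is that the orders of vanishing on the two sides are compatible (namely, simple zero balanced by simple pole), which is guaranteed by the Hiraga--Ichino--Ikeda nonvanishing lemma; accordingly I do not expect any genuine obstacle in this step.
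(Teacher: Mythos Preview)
Your proposal is correct and follows essentially the same approach as the paper: divide the theorem by $\gamma(s,\sigma,\Ad,\psi)$, invoke Lemma~1.2 of \cite{hiraga-ichino-ikeda} to ensure both adjoint $\gamma$-factors are holomorphic and nonzero at $s=0$, use the asymptotic $\frac{1-q^{-s}}{1-q^{s-1}}\sim \frac{s\log q}{1-q^{-1}}$, and read off the residue. The paper's argument is slightly terser but identical in substance.
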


\section{The Conjecture and the Main Theorem}\label{section the main theorem}
In this section, we recall some details of the formal degree conjecture and prove our main result.
\subsection{The Formal Degree Conjecture}\label{subsection the conjecture} For simplicity, we shall still restrict ourselves to quasi-split groups. Thus $\bG$ is still a quasi-split connected reductive group with $\bA$ the split component of its center, and $\bB=\bT\bU$ a fixed Borel subgroup. Let $(\pi,V_\pi)$ be a discrete series representation of $G$ and we fix a Haar measure $\mu$. Recall that we denote by $d(\pi)=d(\pi,\mu)$ the formal degree of $\pi$ with respect to $\mu$ (cf. \S\ref{section measures}). 

Roughly speaking, discrete series representations of a real or $p$-adic reductive group are those irreducible unitary representations ``appearing discretely" in the decomposition of the regular representation on $L^2(G)$. The formal degree of a discrete series is exactly Harish-Chandra's Plancherel measure in his famous Plancherel formula. The explicit determination of Harish-Chandra's Plancherel measure has numerous local and global applications. 

The explicit Plancherel formula for a general real reductive group has been completed by Harish-Chandra in 1970s, which was one of the highest and most fundamental achievements in representation theory of Lie groups. This was based on his explicit classification of the discrete series for all real semi-simple groups. However, such an explicit classification is inaccessible for general $p$-adic reductive groups due to the enormous arithmetic (Galois) complexity. In \cite{hiraga-ichino-ikeda}, an explicit formula of $d(\pi,\mu)$ is conjectured in the framework of LLC. To state this conjecture in its precise form, we recall some desiderata of the refined LLC for quasi-split groups.

 Let ${^LG}=\widehat{G}\rtimes \Gamma$ be the L-group of $\bG$ and $\widehat{G}^\natural$ be the dual group of $\bG/\bA$. This is a subgroup of $\widehat{G}$ containing $\widehat{G}_{\der}$. Let $\varphi:\WD_F\rightarrow {^LG}$ be an L-parameter of $\bG$ with the L-packet $\Pi_{\varphi}$. Let
\begin{align*}
    &S_{\varphi}:=Z_{\widehat{G}}(\varphi(\WD_F)),~\overline{S}_{\varphi}:=S_{\varphi}/Z(\widehat{G})^{\Gamma},\\
    &S_{\varphi}^\natural:=Z_{\widehat{G}^\natural}(\varphi(\WD_F)),~\cS_{\varphi}^\natural:=\pi_0(S_{\varphi}^\natural).
\end{align*}
The ``usual" component group used in the refined LLC for quasi-split groups is $\pi_0(\overline{S}_\varphi)$, and $\cS_{\varphi}^\natural$ is the component group used in the formal degree conjecture. Here we should remark that since we only focus on quasi-split $\bG$, the component group $\pi_0(\overline{S}_\varphi)$ would suffice. For general connected reductive groups, let $\widehat{G}_{\mathrm{sc}}$ be the simply connected cover of the adjoint group $\widehat{G}_{\ad}:=\widehat{G}/Z(\widehat{G})$, then one has to use the pullback of $\overline{S}_\varphi$ in $\widehat{G}_{\mathrm{sc}}$ to define the component group $\cS_{\varphi}$ (see \cite[pp. 287]{hiraga-ichino-ikeda}).

We denote by $\Irr(K)$ the set of irreducible complex representations of a finite group $K$.

\begin{conjecture}[Refined LLC for $p$-adic Quasi-Split Groups]\label{refined LLC}Setting and notations as above.
    \begin{enumerate}
        \item For each L-parameter $\varphi$, there should exist a bijection $\Pi_{\varphi}(\bG)\rightarrow \Irr(\pi_0(\overline{S}_\varphi))$.
        \item (Shahidi) Given a Whittaker datum $\fw$, there should exist a unique $\fw$-representation in each tempered L-packet. 
        \item For tempered $\varphi$, there should exist a bijection $\iota_{\fw}:\Pi_{\varphi}(\bG)\rightarrow \Irr(\pi_0(\overline{S}_\varphi))$ (depending on $\fw$), which maps the unique $\fw$-generic representation to the trivial representation of $\pi_0(\overline{S}_\varphi)$.
        \item For tempered $\varphi$ and $\pi\in \Pi_{\varphi}$, the above bijection $\iota_{\fw}$ should be characterized by certain character identities, and $\dim (\iota_{\fw}(\pi))$ should be independent of the choice of $\fw$.
    \end{enumerate}
\end{conjecture}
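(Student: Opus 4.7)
The plan is to organize the four parts into a chain: first establish part (1) for all parameters via stable characters, then upgrade to parts (3) and (4) on the tempered side via endoscopic character identities, and finally deduce Shahidi's generic packet statement (2) as a consequence of the stability input. The foundational step is to attach to each tempered parameter $\varphi$ a stable virtual character $\Theta_\varphi^{\mathrm{st}}$ on $G$ by descending endoscopic transfer from elliptic endoscopic groups $\bH$ of $\bG$, pinned down inductively by stabilization of the (twisted) Arthur-Selberg trace formula and the fundamental lemma. For nontempered $\varphi$, one reduces to the tempered case through the Langlands classification, exactly as in (5.1) of the paper.

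With $\Theta_\varphi^{\mathrm{st}}$ in hand, I would \emph{define} $\Pi_\varphi$ as the set of irreducible constituents of $\Theta_\varphi^{\mathrm{st}}$ (with the appropriate signs coming from Kottwitz's sign conventions), and then force the bijection with $\Irr(\cS_\varphi)$ via the endoscopic character identities: for each semisimple $s \in S_\varphi$, the linear combination $\sum_{\pi \in \Pi_\varphi}\langle s,\pi\rangle_{\fw}\,\Theta_\pi$ must equal, up to transfer factors, the stable character of the parameter $\varphi$ viewed as a parameter of the endoscopic group $\bH_s$ cut out by $s$. Linear independence of characters together with the orthogonality relations on $\cS_\varphi$ then produce the pairing $\langle\cdot,\cdot\rangle_\fw$, and the Whittaker normalization (part (4)) amounts to fixing the transfer factors via the Whittaker-normalized $\Delta[\fw]$ of Kottwitz-Shelstad, so that the trivial character of $\cS_\varphi$ corresponds to the distinguished $\fw$-generic constituent.

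For part (2) — the existence and uniqueness of an $\fw$-generic element in each tempered packet — the strategy is to combine the endoscopic machinery with the Langlands-Shahidi method. Existence is the content of Shahidi's tempered L-function conjecture and reduces, via the standard module argument and Heiermann's characterization of the cuspidal support of discrete series recalled in Theorem~\ref{Heiermann construction of discrete parameters}, to genericity of supercuspidals supporting the packet; uniqueness follows from the endoscopic identity applied at $s=1$ combined with the multiplicity-one result of Rodier for the Whittaker functional. Part (4) is then automatic: the character identities are built into the construction, and $\fw$-independence of $\dim \iota_\fw(\pi)$ follows from the fact that changing $\fw$ twists $\iota_\fw$ by a character of $\cS_\varphi$.

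The main obstacle is of course the stabilization of the trace formula for a general quasi-split $\bG$, together with the construction of $\Theta_\varphi^{\mathrm{st}}$: for classical groups this was carried out by Arthur and Mok by bootstrapping from $\GL_n$ through twisted endoscopy, while for exceptional or general quasi-split groups no analogous ``large ambient group'' exists, which is precisely why the conjecture remains open in this generality. A secondary and more subtle issue, directly relevant to the rest of this paper, is the reconciliation of the two component groups $\cS_\varphi$ and $\cS_\varphi^\natural$: the refined LLC bijection uses $\cS_\varphi$, whereas the formal degree conjecture is stated in terms of $\cS_\varphi^\natural$, and matching $\dim\rho_\pi$ across these two normalizations is exactly the arithmetic input needed to translate Theorem~\ref{main theorem} into a check of the formal degree conjecture.
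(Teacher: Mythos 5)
The statement you are addressing is labeled \emph{Conjecture}~\ref{refined LLC} in the paper, and the paper supplies no proof of it. It is recorded as a set of desiderata of the refined local Langlands correspondence (essentially the Arthur--Vogan internal-structure-of-packets conjecture together with Shahidi's genericity conjecture), and it is then \emph{assumed} as a working hypothesis in Theorem~\ref{main theorem} and in \S\ref{subsection adjoint gamma factor}. There is therefore no argument in the paper to compare your proposal against: the correct reading of Conjecture~\ref{refined LLC} in this paper is ``hypothesis to be granted,'' not ``statement to be established.''

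As a program, what you sketch is a faithful summary of how parts (1)--(4) have actually been proved where they are known (Arthur for quasi-split symplectic and orthogonal groups, Mok for quasi-split unitary groups, Gan--Savin for $\rG_2$, etc.): construct $\Theta_\varphi^{\mathrm{st}}$ via stabilization and twisted endoscopy, define $\Pi_\varphi$ as its constituents, extract the pairing $\langle\,\cdot\,,\,\cdot\,\rangle_{\fw}$ from endoscopic character identities, and normalize by a Whittaker datum through $\Delta[\fw]$. But this is not a proof. You flag the decisive gap yourself --- for a general quasi-split $\bG$ there is no ``large ambient group'' through which to bootstrap, and the stabilization producing $\Theta_\varphi^{\mathrm{st}}$ does not exist --- and several further ingredients you invoke (Shahidi's tempered $L$-function conjecture in full generality, the required multiplicity-one and linear-independence inputs for arbitrary $\bG$, the very existence of the packets $\Pi_\varphi$) are also unavailable at the level of generality the conjecture demands. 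A secondary caution: deriving part~(2) as a \emph{consequence} of the endoscopic machinery inverts the usual logical order; in Arthur's framework genericity of tempered packets is an input (via the local intertwining relation and the Whittaker normalization of transfer factors), not an output. Your closing observation about reconciling $\cS_\varphi$ with $\cS_\varphi^\natural$ is genuinely relevant, but it concerns how the paper \emph{uses} the conjecture (cf.\ \eqref{double quotient} and the discussion after Theorem~\ref{main theorem}), not how one would prove it.
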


Assuming the refined LLC, we can now state the formal degree conjecture. Let $\varphi$ be a discrete parameter of $\bG$, $\pi\in \Pi_{\varphi}$, and fix a bijection 
\[
\Pi_{\varphi}\rightarrow \Irr(\pi_0(\overline{S}_\varphi)),~\pi'\mapsto \rho_{\pi'}
\]
as explained above. Recall that we've fixed a nontrivial character $\psi:F\rightarrow \CC^1$. Let $\mu:=\mu_{2,\bG}$ be the Haar measure on $G/A$ as explained at the beginning of \textsection\ref{section measures}, which only depends on the choice of $\psi$. Denote by $\Ad$ the adjoint representation of ${^LG}$ on $\widehat{\lieg}/Z(\widehat{\lieg})^{\Gamma})$. Then the formal degree conjecture reads:
\begin{conjecture}[\cite{hiraga-ichino-ikeda}\cite{hiraga-ichino-ikeda-correction}]\label{formal degree conjecture}
    \[
    d(\pi)=\frac{\dim \rho_{\pi}}{|\cS_{\varphi}^\natural|}\cdot |\gamma(0,\pi,\Ad,\psi)|.
    \]
\end{conjecture}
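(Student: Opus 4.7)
The plan is to reduce the absolute identity for $\pi$ to the supercuspidal case for $\sigma$, then exploit the explicit ratio formulas built up in \textsection 3--5. Since every discrete series is a subquotient of some $i_P^G\sigma_\lambda$ with $\sigma$ supercuspidal on a Levi $\bM$, I would take the supercuspidal instance
\[
d(\sigma)=\frac{\dim\rho_\sigma}{|\mathcal{S}_{\varphi_\sigma}^\natural|}\cdot|\gamma(0,\sigma,\Ad,\psi)|
\]
as the induction hypothesis and transfer it along $\sigma\rightsquigarrow\pi$. In the scope of this paper (unramified $\bG$, generic $\pi$, maximal Levi), Heiermann's formula (Theorem 4.3) together with the measure comparison of Proposition 3.1 yields a closed form for $d(\pi)/d(\sigma)$. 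Applying Shahidi's product expansion of $\mu(\sigma\otimes\chi_{s\widetilde{\alpha}})$ (Theorem 4.2) and the adjoint gamma-factor calculation of Corollary 5.4, the residue in Heiermann's formula rewrites as a ratio of adjoint gamma-factors, producing
\[
\frac{d(\pi)}{d(\sigma)}=j^{-1}\cdot\frac{m}{\langle\chi,\alpha^\vee\rangle}\cdot\frac{|\gamma(0,\pi,\Ad,\psi)|}{|\gamma(0,\sigma,\Ad,\psi)|}
\]
after bookkeeping the $(1-q^{-1})$ and $\log q$ factors.

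Multiplying this ratio by the supercuspidal base case gives
\[
d(\pi)=\frac{\dim\rho_\sigma}{|\mathcal{S}_{\varphi_\sigma}^\natural|}\cdot\frac{m}{j\,\langle\chi,\alpha^\vee\rangle}\cdot|\gamma(0,\pi,\Ad,\psi)|,
\]
which matches the conjectured value of $d(\pi)$ if and only if
\[
\frac{\dim\rho_\pi}{|\mathcal{S}_{\varphi_\pi}^\natural|}=\frac{\dim\rho_\sigma}{|\mathcal{S}_{\varphi_\sigma}^\natural|}\cdot\frac{m}{j\,\langle\chi,\alpha^\vee\rangle}.
\]
Since $\pi$ and $\sigma$ are both generic, the Shahidi-type normalization built into Conjecture 6.1 predicts $\dim\rho_\pi=\dim\rho_\sigma=1$, so the whole problem collapses to the purely dual-side identity
\[
\frac{|\mathcal{S}_{\varphi_\pi}^\natural|}{|\mathcal{S}_{\varphi_\sigma}^\natural|}=j\cdot\frac{\langle\chi,\alpha^\vee\rangle}{m}.
\]

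This component-group identity is the step I expect to be the main obstacle, since it couples harmonic-analytic invariants ($j$, $m$, $\langle\chi,\alpha^\vee\rangle$) to Galois-theoretic information about centralizers inside $\hat{G}^\natural$. My approach would be to leverage Heiermann's construction from \textsection 5.1: since $\varphi_\pi|_{\W_F}=\iota_P\circ\varphi_\sigma|_{\W_F}$ and $\varphi_\pi(\diag(q^{1/2},q^{-1/2}))=s_{\sigma,\lambda}$, the group $S_{\varphi_\pi}^\natural$ sits inside $S_{\varphi_\sigma}^\natural$ as the subgroup additionally centralizing $\psi_{\sigma,\lambda}(\SL_2(\CC))$. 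Analyzing the short exact sequence that relates these centralizers to the $s_{\sigma,\lambda}$-fixed part of the connected center of $\hat{M}^\sigma$, and tracking how the character lattice $X^*(\bM)^{\bG}\hookrightarrow X^*(\bA_{\bM})^{\bG}$ controls the component group of that torus, should produce the ratio $j\langle\chi,\alpha^\vee\rangle/m$ on the nose. In full generality this is presently out of reach, but in concrete cases such as split $\rG_2$ on its maximal Levis the component groups are small enough to compute by hand, which is what enables the paper to verify the conjecture there.
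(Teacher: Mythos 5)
The statement you are asked to prove is a \emph{conjecture} — it is the Hiraga--Ichino--Ikeda formal degree conjecture, which the paper imports from the literature and does not (indeed cannot, with current technology) prove. The paper has no ``proof'' of this statement to compare against; it states the conjecture as Conjecture~\ref{formal degree conjecture} and then proves something strictly weaker, namely Theorem~\ref{main theorem}, which computes the ratio $d(\pi)/d(\sigma)$ under a list of working hypotheses (LLC for $\bG$ and $\bM$, triviality of $\varphi_\sigma|_{\SL_2}$, Heiermann's form for $\varphi_\pi$, agreement of Shahidi and Artin $\gamma$-factors) and shows that the conjecture is compatible with this computation if and only if the component-group identity \eqref{compatibility with the conjecture} holds.

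Your write-up correctly reconstructs exactly that reduction: the chain Proposition~\ref{measure quotient} $\to$ Theorem~\ref{Heiermann's formula} $\to$ Theorem~\ref{shahidi main theorem} $\to$ Corollary~\ref{adjoint gamma factor quotient} gives the quotient formula, and using $\dim\rho_\pi=\dim\rho_\sigma=1$ (part (3) of Conjecture~\ref{refined LLC}, both representations being generic) you land on \eqref{compatibility with the conjecture}. But this is a proof of the \emph{ratio} theorem, not of the conjecture itself. Two essential ingredients are missing and are openly acknowledged as missing in the paper: (i) the supercuspidal base case $d(\sigma)=\tfrac{1}{|\cS^\natural_{\varphi_\sigma}|}|\gamma(0,\sigma,\Ad,\psi)|$ is assumed, not established, for a general unramified $\bG$; and (ii) the dual-side identity $|\cS^\natural_{\varphi_\pi}|/|\cS^\natural_{\varphi_\sigma}|=j\langle\chi,\alpha^\vee\rangle/m$ is not proven in general — the paper only checks, case by case, that it reduces to $|\cS_{\varphi_\pi}|/|\cS_{\varphi_\sigma}|=j$ for split simple groups of types $A$--$D$ and $\rG_2$ via the auxiliary relation \eqref{conjectural relation}, and explicitly says the author does not know whether \eqref{conjectural relation} holds in the generality of the main theorem. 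Your sketch of how to attack \eqref{compatibility with the conjecture} via $S_{\varphi_\pi}^\natural\subset S_{\varphi_\sigma}^\natural$ and the $s_{\sigma,\lambda}$-fixed torus is plausible but not carried out anywhere in the paper; it would be genuinely new content. In short, what you have written is a sound blind reconstruction of Theorem~\ref{main theorem} and its corollary criterion, presented under the wrong heading: it is not, and in the paper's framework could not be, a proof of Conjecture~\ref{formal degree conjecture}.
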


Although we just stated the conjecture for quasi-split $\bG$, there is a precise version for general $\bG$, and it has been verified in various cases using different methods, for example (by no means complete):
\begin{itemize}
    \item Inner forms of $\GL(n)$:  Silberger-Zink, cf. \cite[\S 4]{hiraga-ichino-ikeda};
    \item Unitary groups $\mathrm{U}(n)$: Beuzart-Plessis \cite{bp-unitary} and Morimoto \cite{morimoto};
    \item Odd orthogonal groups $\SO(2n+1)$ and the metaplectic group $\mathrm{Mp}(2n)$: Ichino-Lapid-Mao \cite{ichino-lapid-mao};
    \item Kaletha's regular and non-singular supercuspidals of tame groups: Schwein \cite{schwein} and Ohara \cite{ohara};
    \item etc.
\end{itemize}
There was also an announcement by Beuzart-Plessis of a uniform proof for all classical groups via twisted endoscopy \cite{oberwolfach}.

By general representation theory of $p$-adic groups, a discrete series $\pi$ of $G$ is parabolically induced from a supercuspidal $\sigma$ of some Levi subgroup of $\bG$. Thus a natural idea is trying to reduce the conjecture to the supercuspidal case through the computation of $d(\pi)/d(\sigma)$. Of course this is quite difficult in general, since the cuspidal supports and L-parameters of discrete series for $p$-adic groups are very subtle in general (cf. \S\ref{subsection construction of discrete parameters}). This paper is such a first attempt: we deal with the case where $\bG$ is unramified, $\pi$ is generic and the parabolic is maximal, using tools from local harmonic analysis. As an application, we shall verify the conjecture for discrete series of split $\mathrm{G}_2$ supported on maximal Levi subgroups.

\subsection{The Main Theorem}
The setting of our main theorem is as follows:
\begin{itemize}
    \item $\bG$ is unramified, i.e. quasi-split and splits over an unramified extension;
    \item $\bP=\bM\bN$ is the standard parabolic subgroup corresponding to $ \Delta-\{\alpha\}$ for a positive simple root $\alpha\in \Delta$;
    \item $\sigma$ is an irreducible unitary generic supercuspidal of $M$, such that $i_P^G\sigma_{\Tilde{\alpha}/j}$ contains a unique generic discrete series subrepresentation $\pi$ of $G$ for some $j\in\{1,2\}$.
\end{itemize}

Denote by $\varphi_{\pi}$ (resp. $\varphi_{\sigma}$) the L-parameter of $\pi$ (resp. $\sigma$). Since the LLC is conjectural in general, following \S\ref{subsection adjoint gamma factor}, we have to assume the refined LLC (Conjecture \ref{refined LLC}), with
\begin{itemize}
\item $\varphi_{\sigma}|_{\SL_2(\CC)}$ is trivial, 
$$\varphi_\pi(w,1)=\iota_{P}\circ\varphi_\sigma(w,1)\text{ for }w\in \W_F\quad \text{and}\quad\varphi_\pi(\begin{bmatrix}
    q^{1/2} & \\
    & q^{-1/2}
\end{bmatrix})=s_{\sigma,\lambda},$$ cf.  \eqref{s-sigma-lambda} and \S \ref{subsection adjoint gamma factor}.
\end{itemize}
Since we used the Langlands-Shahidi theory, we also need to assume
\begin{itemize}
    \item Shahidi's local factors $\gamma^{\Sh}(s,\sigma,r_i,\psi)$ do coincide with those from the LLC (cf. the Remark of Theorem \ref{shahidi main theorem}); 
\end{itemize}

\begin{theorem}\label{main theorem}
    Under the above setting, we have
    \[
    \frac{d(\pi)}{d(\sigma)}=j^{-1}\cdot \frac{m}{\langle \chi,\alpha^\vee\rangle} \cdot  \frac{|\gamma(0,\pi,\Ad,\psi)|}{|\gamma(0,\sigma,\Ad,\psi)|},
    \]
    where 
    \begin{itemize}
        \item $\chi\in X^*(\mathbf{M})^{\mathbf{G}}\isom \ZZ$ is the generator with $\langle \chi,\alpha^\vee\rangle>0$;
        \item $m$ is the index of $\res(X^*(\mathbf{M})^{\mathbf{G}})$ in $X^*(\mathbf{A}_{\mathbf{M}})^{\mathbf{G}}$.
    \end{itemize}
    
    In particular, this result is compatible with the formal degree conjecture if and only if
\begin{equation}\label{compatibility with the conjecture}
    \frac{|\mathcal{S}^\natural_{\varphi_\pi}|}{|\mathcal{S}^\natural_{\varphi_\sigma}|}=j\cdot \frac{\langle \chi,\alpha^\vee\rangle}{m}.
\end{equation}
\end{theorem}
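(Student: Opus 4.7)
The plan is to combine three inputs already established---the measure comparison (Proposition~\ref{measure quotient}), Heiermann's residue formula (Theorem~\ref{Heiermann's formula}), and the adjoint $\gamma$-factor expansion (Corollary~\ref{adjoint gamma factor quotient})---so that the theorem reduces to a single residue identity. Since $\bP$ is maximal, $\dim\bA_{\bM}-\dim\bA_{\bG}=1$; combining Proposition~\ref{measure quotient} with Theorem~\ref{Heiermann's formula} (the constants $\gamma(\bG/\bM)$ cancel) will yield
\[
\frac{d(\pi)}{d(\sigma)} \;=\; \frac{\log q}{1-q^{-1}}\cdot\frac{m}{\langle\chi,\alpha^\vee\rangle}\cdot\underset{s=1/j}{\Res}\,\mu(\sigma\otimes\chi_{s\Tilde\alpha}),
\]
while Corollary~\ref{adjoint gamma factor quotient} (with $s_0=1/j$) gives
\[
\frac{|\gamma(0,\pi,\Ad,\psi)|}{|\gamma(0,\sigma,\Ad,\psi)|} \;=\; \frac{\log q}{1-q^{-1}}\cdot\left|\underset{s=0}{\Res}\,G(s)\right|,\quad G(s):=\prod_{i=1}^m\gamma(s+i/j,\sigma,r_i,\psi)\gamma(s-i/j,\Tilde\sigma,r_i,\psi).
\]
So the theorem boils down to the residue identity $\Res_{s=1/j}\,\mu = j^{-1}\,|\Res_{s=0}\,G|$.

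To prove this I would plug Shahidi's factorisation (Theorem~\ref{shahidi main theorem}), together with the working hypothesis $\gamma^{\Sh}=\gamma$, into $\mu$, obtaining $\mu(\sigma\otimes\chi_{s\Tilde\alpha})=\prod_i\gamma(is,\sigma,r_i,\Bar\psi)\gamma(-is,\Tilde\sigma,r_i,\psi)$. By Shahidi's classification, only the factor $\gamma(js,\sigma,r_j,\Bar\psi)$ has a pole at $s=1/j$, and the change of variable $u=js$ produces the expected factor $j^{-1}$ in the residue. The parallel expansion of $G(s)$ has its unique simple pole at $s=0$ coming from $\gamma(s+1,\sigma,r_j,\psi)$, with no change of variables needed. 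Comparing the two residues factor-by-factor, the only discrepancies beyond the $j^{-1}$ are the swap $(\psi,\sigma)\leftrightarrow(\Bar\psi,\Tilde\sigma)$ in half of the $\gamma$-factors; the standard relation $\overline{\gamma(s,\sigma,r_i,\psi)}=\gamma(\bar s,\Tilde\sigma,r_i,\Bar\psi)$ restricted to the real axis makes these invisible on moduli. Hence $|\Res_{s=1/j}\,\mu| = j^{-1}\,|\Res_{s=0}\,G|$; since $d(\pi)/d(\sigma)>0$ forces $\Res_{s=1/j}\,\mu>0$, the modulus bars on the $\mu$-side drop, and assembling the pieces yields the stated formula.

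The ``in particular'' half then follows immediately: feeding the formula just proved into Conjecture~\ref{formal degree conjecture} for both $\pi$ and $\sigma$ and using $\dim\rho_\pi=\dim\rho_\sigma=1$ (Shahidi's genericity normalisation, Conjecture~\ref{refined LLC}(3)) gives the stated equivalence.

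I expect the main technical obstacle to be the modulus-one cancellation in the $(\psi,\sigma)\leftrightarrow(\Bar\psi,\Tilde\sigma)$ swap, together with verifying that no incidental zero of some other $\gamma$-factor in the product accidentally cancels the simple pole of interest. Given the explicit shape of Shahidi's factorisation and the working hypothesis $\gamma^{\Sh}=\gamma$, this should amount to a careful bookkeeping exercise rather than a conceptual difficulty.
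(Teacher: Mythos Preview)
Your approach is essentially the same as the paper's: combine Proposition~\ref{measure quotient}, Theorem~\ref{Heiermann's formula}, Theorem~\ref{shahidi main theorem}, and Corollary~\ref{adjoint gamma factor quotient}, then match residues; the factor $j^{-1}$ from the change of variable $u=js$ is identified exactly as in the paper.

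There is one imprecision worth fixing. You describe the discrepancy between the $\mu$-side product and the $G$-side product as the simultaneous swap $(\psi,\sigma)\leftrightarrow(\Bar\psi,\Tilde\sigma)$, and invoke the complex-conjugation identity $\overline{\gamma(s,\sigma,r_i,\psi)}=\gamma(\bar s,\Tilde\sigma,r_i,\Bar\psi)$ to erase it on moduli. But if you compare
\[
\gamma(is,\sigma,r_i,\Bar\psi)\,\gamma(-is,\Tilde\sigma,r_i,\psi)\Big|_{s=1/j}
\quad\text{with}\quad
\gamma(s+i/j,\sigma,r_i,\psi)\,\gamma(s-i/j,\Tilde\sigma,r_i,\psi)\Big|_{s=0},
\]
the second factors already coincide; the only mismatch is $\Bar\psi$ versus $\psi$ in the first factor. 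Your conjugation identity swaps \emph{both} $\psi\leftrightarrow\Bar\psi$ and $\sigma\leftrightarrow\Tilde\sigma$, so it does not directly close this gap. The paper instead uses the more elementary relation
\[
\gamma(s,\sigma,r_i,\Bar\psi)=\pm\,\gamma(s,\sigma,r_i,\psi),
\]
which follows because replacing $\psi$ by $\Bar\psi=\psi(-\cdot)$ affects only the $\varepsilon$-factor, by a sign. This is a bookkeeping slip rather than a conceptual one; once you substitute this relation, the rest of your argument (including the positivity remark dropping the modulus on the $\mu$-side, and the genericity $\dim\rho_\pi=\dim\rho_\sigma=1$ for the ``in particular'' clause) goes through verbatim.
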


\begin{proof}
    We choose $K_G$ to be hyperspecial. Then by Proposition \ref{measure quotient},
    \[
    \frac{d(\pi)}{d(\sigma)}=\frac{\deg(\pi)}{\deg(\sigma)}\cdot \gamma(\mathbf{G}/\mathbf{M})^{-1}\cdot (1-q^{-1})^{-1}.
    \]
   For clarity, we write $s_0=1/j$. Using local harmonic analysis, we got
    \[
    \frac{\deg(\pi)}{\deg(\sigma)}=\gamma(\mathbf{G}/\mathbf{M})\cdot \log q\cdot \frac{m}{\langle \chi,\alpha^\vee\rangle}\cdot \underset{s=s_0}{\Res}\mu(\sigma\otimes \chi_{s\Tilde{\alpha}})
    \]
    in Proposition \ref{Heiermann's formula}, and
    \[\mu(\sigma\otimes \chi_{s\Tilde{\alpha}})=\prod_{i=1}^m \gamma^{\Sh}(is,\sigma,r_i,\Bar{\psi})\gamma^{\Sh}(-is,\Tilde{\sigma},r_i,\psi),\]
    by Shahidi's Theorem \ref{shahidi main theorem}. We've assumed that Shahidi's local factors $\gamma^{\Sh}$ coincide with the usual ones from LLC. Thus we have
\[
\frac{d(\pi)}{d(\sigma)}=\left(\frac{\log q}{1-q^{-1}}\right)\cdot \frac{m}{\langle \chi,\alpha^\vee\rangle}\cdot  \underset{s=s_0}{\Res}\prod_{i=1}^m\gamma(is,\sigma,r_i,\Bar{\psi})\gamma(-is,\Tilde{\sigma},r_i,\psi),
\]
and the simple pole appears in the term $\gamma(js,\sigma,r_i,\Bar{\psi})$ (cf. Theorem \ref{shahidi classification}).

By Corollary \ref{adjoint gamma factor quotient}, 
    \[
    \frac{\gamma(0,\pi,\Ad,\psi)}{\gamma(0,\sigma,\Ad,\psi)}=\left(\frac{\log q}{1-q^{-1}}\right)\cdot \underset{s=0}{\Res}\prod_{i=1}^m\gamma(s+is_0,\sigma,r_i,\psi)\gamma(s-is_0,\Tilde{\sigma},r_i,\psi),
    \]
   where this time the simple pole appears in the term $\gamma(s+js_0,\sigma,r_i,\psi)=\gamma(s+1,\sigma,r_i,\psi)$.

   Since 
   \[
\gamma(s,\sigma,r_i,\Bar{\psi})=\pm  \gamma(s,\sigma,r_i,\psi)
   \]   
  and for a meromorphic function $f(z)$ with a pole $z_0$,
   \[
   \underset{z=\lambda^{-1}z_0}{\Res} f(\lambda z)=\lambda^{-1}\underset{z=z_0}{\Res} f(z),
   \]
   we obtain
    \[
    \frac{d(\pi)}{d(\sigma)}=j^{-1}\cdot \frac{m}{\langle \chi,\alpha^\vee\rangle}\cdot  \frac{|\gamma(0,\pi,\Ad,\psi)|}{|\gamma(0,\sigma,\Ad,\psi)|}.
    \]
\end{proof}

Here the structure constant $\langle \chi,\alpha^\vee\rangle/n$ comes from the modification of the component group in the formal degree conjecture. Let us just take a look at the simplest nontrivial example to see what happens.

\begin{example}
    Let $\bG=\GL_{2n}$, $\bM=\GL_n\times \GL_n$ be the usual standard Levi subgroup, and $\bP=\bM\bN$ be the corresponding standard upper triangular parabolic subgroup of $\bG$. By the Bernstein-Zelevinsky classification, pick an irreducible unitary supercuspidal $\tau$ of $\GL_n(F)$, write $\sigma:=\tau\boxtimes\tau$ as a representation of $M$, then there is a unique discrete series subrepresentation $\pi$ of 
\[
i_P^G(\tau|\mathrm{det}_n|^{1/2}\boxtimes \tau|\mathrm{det}_n|^{-1/2}).
\]

We use the standard diagonal torus $\bT$ and the standard coordinates $X^*(\bT)=\oplus_{i=1}^{2n}\ZZ e_i$. If we identify $X^*(\bM)$ with a subgroup of $X^*(\bT)$, then 
\[
X^*(\bM)=\ZZ\cdot (e_1+\cdots +e_n)\oplus \ZZ\cdot (e_{n+1}+\cdots +e_{2n})
\]
and 
\[
X^*(\bM)^{\bG}=\ZZ\cdot [(e_1+\cdots +e_n)-(e_{n+1}+\cdots +e_{2n})],
\]
with $m:=[X^*(\bA_{\bM})^{\bG}:\res(X^*(\bM)^{\bG})]=n$. 

Of course the generator of $X^*(\bM)^{\bG}$ is $\chi=(e_1+\cdots +e_n)-(e_{n+1}+\cdots +e_{2n})$ and the coroot corresponding to our maximal parabolic $\bP$ is $\alpha^\vee=e_{n}^*-e_{n+1}^*$. Now the fundamental weight $\Tilde{\alpha}$ should be the element in $X^*(\bM)^{\bG}\otimes \RR=\fa_{\bM}^{\bG*}$ characterized by $\langle \Tilde{\alpha},\alpha^\vee\rangle =1$, so that
\[
\Tilde{\alpha}=\frac{1}{2}\cdot [(e_1+\cdots +e_n)-(e_{n+1}+\cdots +e_{2n})]
\]
corresponding to $\det_n^{1/2}\boxtimes \det_n^{-1/2}$. So for this example $j=1$, and the structure constant above becomes
\[
\frac{\langle \chi,\alpha^\vee\rangle}{m}=\frac{2}{n}.
\]

This number comes from the modification of the ``usual" component group. For $\bH=\GL_n$, a discrete L-parameter is just an irreducible representation of $\varphi:\WD_F\rightarrow \GL_n(\CC)=\widehat{H}$, so that $Z_{\widehat{H}}(\varphi)=Z(\widehat{H})$ and $\pi_0(\overline{S}_{\varphi})=1$. But $$\widehat{H}^\natural:=(\bH/\bA_{\bH})^\wedge = \SL_n(\CC)\subset \GL_n(\CC)=\widehat{H},$$
which leads to $\cS^\natural_{\varphi}=Z_{\widehat{H}^\natural}(\varphi)\simeq \mu_n$, so that in the above example 
\[
\frac{|\cS_{\varphi_\pi}^\natural|}{|\cS_{\varphi_\sigma}^{\natural}|}=\frac{2n}{n\cdot n}=\frac{2}{n},
\]
which is exactly the constant $\langle \chi,\alpha^\vee\rangle/m$ above.
\end{example}

\section{An Application to $\rG_2$}\label{section G2}
In this section, as an application of the main theorem, we verify the formal degree conjecture for discrete series of split $\rG_2$ supported on a maximal Levi subgroup. Let $\mathbf{G}=\mathrm{G}_2$ be the split exceptional group of type $\mathrm{G}_2$ over $F$. We first recall the classification of discrete series representations of $G=\mathbf{G}(F)$ supported on a maximal Levi.

Fix a maximal torus $\mathbf{T}$ of $\mathbf{G}$ and a Borel subgroup $\mathbf{B}$ containing $\mathbf{T}$. Let $\Sigma=\Sigma(\mathbf{G},\mathbf{T})$ be the root system and $\Delta=\{\alpha,\beta\}$ the set of simple roots relative to $\mathbf{B}$, with $\alpha$ short and $\beta$ long. Then the positive roots can be written as
\[
\alpha,\beta,\alpha+\beta,2\alpha+\beta,3\alpha+\beta,3\alpha+2\beta,
\]
with corresponding coroots
\[
\alpha^\vee,\beta^\vee,\alpha^\vee+3\beta^\vee,2\alpha^\vee+3\beta^\vee,\alpha^\vee+\beta^\vee,\alpha^\vee+2\beta^\vee,
\]
and
\[
\langle \alpha,\beta^\vee\rangle=(-1),\langle \beta,\alpha^\vee\rangle=(-3).
\]

Let $\mathbf{P}_{\alpha}=\mathbf{M}_{\alpha}\mathbf{N}_{\alpha}$ (resp. $\mathbf{P}_{\beta}=\mathbf{M}_{\beta}\mathbf{N}_{\beta}$) be the maximal parabolic subgroup of $\mathbf{G}$ corresponding to $\{\alpha\}$ (resp. $\{\beta\}$), i.e. $\mathbf{N}_{\alpha}$ (resp. $\mathbf{N}_{\beta}$) contains the root subgroup of $\beta$ (resp. $\alpha$) and $\mathbf{M}_{\alpha}$ (resp. $\mathbf{M}_{\beta}$) contains the root subgroup of $\alpha$ (resp. $\beta$). 

For convenience of the reader, we list the related normalizations and the decomposition $r=\oplus_i r_i$ (the adjoint representation of ${^L\!M}$ on $\widehat{\mathfrak{n}}$) for these two cases as follows.

\begin{itemize}
   \item $\mathbf{P}_{\alpha}=\mathbf{M}_{\alpha}\mathbf{N}_{\alpha}$, 
    \begin{align*}
           & \rho_\alpha=\frac{1}{2}[\beta+(\alpha+\beta)+(2\alpha+\beta)+(3\alpha+\beta)+(3\alpha+2\beta)]=\frac{3}{2}(3\alpha+2\beta),\\
           &\Tilde{\beta}=\langle \rho_\alpha,\beta^\vee\rangle ^{-1}\rho_\alpha=(3\alpha+2\beta).
        \end{align*}
        We may fix an isomorphism $\mathbf{M}_{\alpha}\isom\GL_2$ so that the character $\det$ of $\GL_2$ corresponds to $3\alpha+2\beta=\Tilde{\beta}$ of $\mathbf{M}_{\alpha}$, and the modulus character restricted to $\mathbf{M}_{\alpha}$ is $\delta_{\alpha}=|\det|^3$.  
        
        In this case the representation $r$ of $^L\!M_\alpha$ on $\widehat{\lien}_\alpha$ decomposes as
    \begin{center}
            \begin{tabular}{|c|c|c|}
            \hline 
               $r_1=\std$  & $r_2=\det$ & $r_3=\std\otimes \det$ \\

               \hline
                $\beta^\vee,(3\alpha+\beta)^\vee$ & $(3\alpha+2\beta)^\vee$ & $(\alpha+\beta)^\vee,(2\alpha+\beta)^\vee$\\
                \hline
            \end{tabular}
        \end{center}
        \item 
$\mathbf{P}_{\beta}=\mathbf{M}_{\beta}\mathbf{N}_{\beta}$,
         \begin{align*}
           & \rho_{\beta}=\frac{1}{2}[\alpha+(\alpha+\beta)+(2\alpha+\beta)+(3\alpha+\beta)+(3\alpha+2\beta)]=\frac{5}{2}(2\alpha+\beta),\\
           &\Tilde{\alpha}=\langle \rho_\beta,\alpha^\vee\rangle ^{-1}\rho_\beta =(2\alpha+\beta).
        \end{align*}
        We may fix an isomorphism $\mathbf{M}_{\beta}\isom\GL_2$ so that the character $\det$ of $\GL_2$ corresponds to $2\alpha+\beta=\Tilde{\alpha}$ of $\mathbf{M}_{\beta}$, and the modulus character restricted to $\mathbf{M}_{\beta}$ is $\delta_{\beta}=|\det|^5$.
        
         In this case the representation $r$ of $^L\!M_\beta$ on $\widehat{\lien}_\beta$ decomposes as
        \begin{center}
            \begin{tabular}{|c|c|}
            \hline 
               $r_1=\Sym^3\otimes \det^{-1}$  & $r_2=\det$\\

               \hline
                $\alpha^\vee,(\alpha+\beta)^\vee,(3\alpha+\beta)^\vee,(3\alpha+2\beta)^\vee$ & $(2\alpha+\beta)^\vee$\\
                \hline
            \end{tabular}
        \end{center}

\end{itemize}
Note that in these normalizations, the character $\det$ of $\bM_\alpha$ (resp. $\bM_\beta$) corresponds exactly to the positive root perpendicular to $\alpha$ (resp. $\beta$).

The discrete series of $G$ has been classified by Mui\'c. Here we just summarize below the cuspidal support of those supported on a maximal parabolic; see the original paper \cite{muic-unitary-dual-of-g2} of Mui\'c, and another convenient reference is \cite[\textsection 2]{gan-savin-g2theta}.

\begin{theorem}
    Let $\tau$ be an irreducible unitary supercuspidal representation of $\GL_2(F)$ which is self-dual, with central character $\omega_\tau$.
    \begin{enumerate}
    \item If $\omega_\tau=1$, then $i_{P_\alpha}^G(\tau\otimes |\det|^{1/2})$ has a unique irreducible subrepresentation $\pi_\alpha(\tau,1/2)$.
        \item If $\omega_\tau=1$, then $i_{P_\beta}^G(\tau\otimes |\det|^{1/2})$ has a unique irreducible subrepresentation $\pi_{\beta}(\tau,1/2)$.
         \item If $\omega_\tau\neq 1$, then $i_{P_\beta}^G(\tau\otimes |\det|)$ has a unique irreducible subrepresentation $\pi_{\beta}(\tau,1)$.
    \end{enumerate}
    The representations $\pi_{\alpha}(\tau,1/2),,\pi_{\beta}(\tau,1/2),\pi_{\beta}(\tau,1)$ are generic discrete series, and this classifies all the discrete series representations of $G$ supported on a maximal Levi.
\end{theorem}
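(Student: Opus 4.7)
The plan is to deduce the classification by applying Shahidi's Theorem~\ref{shahidi classification} separately to the two standard maximal parabolics $\bP_\alpha$ and $\bP_\beta$ of $\rG_2$. Since $\bM_\alpha,\bM_\beta\isom\GL_2$ and every irreducible supercuspidal of $\GL_2(F)$ is generic, any discrete series of $G$ supported on a maximal Levi must be the unique generic subrepresentation furnished by Shahidi's theorem (combined with the Knapp--Stein length-two structure of $i_P^G\tau_\lambda$ at the reducibility points); it therefore suffices to enumerate the Shahidi data in each case.

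The first step is to interpret the constraint $w_0\sigma\isom\sigma$. Computing $w_0$ from the characterisation in \S\ref{subsection shahidi} using the $\rG_2$ root data $\langle\alpha,\beta^\vee\rangle=-1$, $\langle\beta,\alpha^\vee\rangle=-3$, I would find $w_0 = s_{3\alpha+2\beta}$ for $\bP_\alpha$ and $w_0 = s_{2\alpha+\beta}$ for $\bP_\beta$. In each case $w_0$ lies outside $W^\bM$, fixes the unique simple root of $M$, and acts non-trivially on the centre of $M\isom\GL_2$, so it induces the non-trivial outer automorphism $g\mapsto{}^tg^{-1}$ of $\GL_2$, whence $w_0\tau\isom\widetilde\tau$. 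Thus the Shahidi condition reduces to self-duality $\tau\isom\widetilde\tau$, which forces $\omega_\tau^2 = 1$. Next, using the decompositions $r = \bigoplus_i r_i$ recorded in the excerpt, I would determine for each self-dual $\tau$ the unique $j$ for which $L(s,\tau,r_j)$ has a simple pole at $s=0$. For $\bP_\alpha$ with $r_1=\std$, $r_2=\det$, $r_3=\std\otimes\det$, supercuspidality forces $L(s,\tau) = L(s,\tau\otimes\omega_\tau) = 1$, so only $r_2$ can contribute, and it does precisely when $\omega_\tau=1$; this yields $j=2$, induction point $\Tilde\beta/2 = |\det|^{1/2}$, and hence $\pi_\alpha(\tau,1/2)$. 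For $\bP_\beta$ with $\omega_\tau=1$, the same analysis of $r_2=\det$ produces $\pi_\beta(\tau,1/2)$ at $|\det|^{1/2}$. When $\omega_\tau\neq 1$, the only remaining possibility for $\bP_\beta$ is $j=1$ from $r_1 = \Sym^3\otimes\det^{-1}$.

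The main obstacle is verifying that $L(s,\Sym^3\varphi_\tau\otimes\omega_\tau^{-1})$ does have a simple pole at $s=0$, equivalently that $\omega_\tau$ appears as a $\W_F$-subrepresentation of $\Sym^3\varphi_\tau$. My plan is to exploit that self-duality together with $\det\varphi_\tau = \omega_\tau$ forces $\varphi_\tau\isom\varphi_\tau\otimes\omega_\tau$, whence $\varphi_\tau = \mathrm{Ind}_{\W_E}^{\W_F}\chi$ for $E/F$ the quadratic extension cut out by $\omega_\tau$ and a character $\chi$ of $\W_E$ satisfying $\chi^\sigma = \chi^{-1}$, $\chi|_{\W_F}=1$; a Mackey/plethysm computation then gives $\Sym^3\varphi_\tau \isom \mathrm{Ind}(\chi^3)\oplus\mathrm{Ind}(\chi)$, reducing the pole condition to an explicit constraint on $\chi$ that must be checked against the class of self-dual supercuspidals arising in the theorem. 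Once the $L$-function poles are pinned down in all three cases, Shahidi's theorem directly delivers the unique generic subrepresentation, its square-integrability, and the non-tempered non-generic nature of the Langlands quotient, completing the classification.
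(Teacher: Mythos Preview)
The paper does not prove this theorem; it is cited from Mui\'c (with a pointer to Gan--Savin) as input for the $\rG_2$ application. Your plan to derive it from Shahidi's Theorem~\ref{shahidi classification} by analysing the poles of $L(s,\tau,r_i)$ for the explicit $r_i$ tabulated in \S\ref{section G2} is the standard route, and your treatment of cases~(1) and~(2) is correct.

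There is, however, a genuine gap in case~(3). Your Mackey computation $\Sym^3\varphi_\tau\isom\mathrm{Ind}(\chi^3)\oplus\mathrm{Ind}(\chi)$ is right, and one more step gives $L(s,\tau,r_1)=L_E(s,\chi^3)$; hence the pole at $s=0$ occurs precisely when $\chi^3=1$, i.e.\ when $\varphi_\tau(\W_F)\isom S_3$. This is \emph{strictly stronger} than the printed hypothesis ``$\omega_\tau\neq 1$'': a character $\chi$ of $E^\times$ trivial on $F^\times$ can have arbitrary order, and when $\chi^3\neq 1$ no $r_j$ has a pole, $i_{P_\beta}^G(\tau\otimes|\det|)$ is irreducible, and no discrete series is produced. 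The paper's own $L$-parameter table for $\pi_\beta(\tau,1)$ in fact records $\varphi(\W_F)\isom S_3$, confirming that this extra constraint is the correct one. So your phrase ``must be checked against the class of self-dual supercuspidals arising in the theorem'' conceals a real discrepancy: either item~(3) needs the stronger hypothesis that $\varphi_\tau$ has image $S_3$, or you must argue separately that the remaining self-dual $\tau$ with $\omega_\tau\neq 1$ contribute nothing to the discrete spectrum --- which, by the computation above, they do not. For the exhaustion claim you should also dispose of $\bP_\alpha$ with $\omega_\tau\neq 1$: there $L(0,\tau,r_i)=1$ for $i=1,2,3$, so again no discrete series arises.
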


The full explicit local Langlands correspondence for $\mathrm{G}_2$ has been established in \cite{gan-savin-g2-llc} based on a series work in exceptional theta correspondence. The conjectural L-parameters for non-supercuspidal discretes series have already been classified in \cite[\textsection 3.5]{gan-savin-g2theta}, which we now recall. 

We identify the $L$-group of (split) $\mathrm{G}_2$ with the complex simple Lie group $\mathrm{G}_2(\CC)$ with trivial center; also, we identify the roots of $\rG_2(\CC)$ with the coroots of $\bG=\rG_2$. Given a root $\gamma^\vee\in \Sigma^\vee$, we denote by $\SL_{2,\gamma^\vee}$ the $\SL_2$ generated by the root subgroups of $\pm \gamma^\vee$. Now on the dual side $\alpha^\vee$ (resp. $\beta^\vee$) is the long (resp. short) root, and 
\[
Z_{\rG_2}(\SL_{2,\alpha^\vee})=\SL_{2,(\alpha^\vee+2\beta^\vee)},~Z_{\rG_2}(\SL_{2,\beta^\vee})=\SL_{2,(2\alpha^\vee+3\beta^\vee)}.
\]
Here $(\alpha^\vee+2\beta^\vee)$ (resp. $(2\alpha^\vee+3\beta^\vee)$) is also the positive root perpendicular to $\alpha^\vee$ (resp. $\beta^\vee$). Finally, the long root subgroups of $\mathrm{G}_2(\CC)$ generate an $\mathrm{SL}_3$. 

Now let $\pi$ be an irreducible discrete series representation of $G$ with L-parameter $\varphi=\varphi_\pi:\W_F\times \mathrm{SL}_2(\CC)\rightarrow \mathrm{G}_2(\CC)$. We then have 
\begin{center}
    \begin{tabular}{|c|c|c|c|}
    \hline
      $\pi$   & $\varphi(\mathrm{SL}_2)$ & $\varphi(\W_F)$ & $\pi_0(\overline{S}_\varphi)=\cS_{\varphi}^\natural$ \\
      \hline
       $\pi_{\alpha}(\tau,1/2)$  & $=\SL_{2,(\alpha^\vee+2\beta^\vee)}$ & $\subset \mathrm{SL}_{2,\alpha^\vee}\subset {^L\!M_\alpha}$ & $\ZZ/2\ZZ$\\

        $\pi_{\beta}(\tau,1/2)$  & $=\SL_{2,(2\alpha^\vee+3\beta^\vee)}$ & $\subset \mathrm{SL}_{2,\beta^\vee}\subset {^L\!M_\beta}$ & $\ZZ/2\ZZ$\\

         $\pi_{\beta}(\tau,1)$  & $=\SO_3\subset \mathrm{SL}_3$ & $=Z_{\mathrm{G}_2}(\SO_3)\isom S_3$ &  1\\
         & &$\subset \SL_{2,\beta^\vee}\subset {^L\!M_\beta}$ & \\
         \hline
    \end{tabular}
\end{center}
by \cite[\S 3.5]{gan-savin-g2theta}. (Note that the representations $\pi_{\alpha}(\tau,1/2)$, $\pi_{\beta}(\tau,1/2)$, $\pi_{\beta}(\tau,1)$ above are denoted by $\delta_P(\tau)$, $\delta_Q(\tau)$, $\pi_{\mathrm{gen}}[\tau]$, respectively, in \cite{gan-savin-g2theta}.)

 We now check that these parameters are compatible with the condition in \S\ref{subsection adjoint gamma factor}. Matching the normalizations at the beginning of this section, we fix $^L\!M_\alpha\isom \GL_{2,\alpha^\vee}$ (resp. $^L\!M_\beta\isom \GL_{2,\beta^\vee}$) such that the character $\det$ of $\GL_2$ corresponds to $(\alpha^\vee+2\beta^\vee)$ (resp. $(2\alpha^\vee+3\beta^\vee)$), which is the root perpendicular to $\alpha^\vee$ (resp. $\beta^\vee$).
\begin{itemize}
    \item For $\pi=\pi_\alpha(\tau,1/2)$, $j=2$, we have to check that under the above normalization, 
    \[
    \varphi_\pi(\begin{bmatrix}
        q^{1/2} & \\
        & q^{-1/2}
    \end{bmatrix})\in Z(^L\!M_\alpha)
    \]
    corresponds to 
    \[
    t=\begin{bmatrix}
        q^{1/2} & \\
        & q^{1/2}
    \end{bmatrix}\in Z(\GL_{2,\alpha^\vee}).
    \]
    Here $t\in {^L\!T}$ is characterized by
    \[
    \alpha^\vee(t)=1,~\det(t)=(\alpha^\vee+2\beta^\vee)(t)=q.
    \]
    Switching to $\varphi_{\pi}:\SL_2\xrightarrow{\sim}\SL_{2,(\alpha^\vee+2\beta^\vee)}$, this characterizes exactly the semi-simple class of 
    \[
    \varphi_\pi(\begin{bmatrix}
        q^{1/2} & \\
        & q^{-1/2}
    \end{bmatrix})\in \SL_{2,(\alpha^\vee+2\beta^\vee)}.
    \]
    The case $\pi_\beta(\tau,1/2)$ is dual and similar.
    \item For $\pi=\pi_\beta(\tau,1)$, $j=1$, we have to check that
    \[
    \varphi_\pi(\begin{bmatrix}
        q^{1/2} & \\
        & q^{-1/2}
    \end{bmatrix})\in Z(^L\!M_\beta)
    \]
    corresponds to 
    \[
    t=\begin{bmatrix}
        q& \\
        & q
    \end{bmatrix}\in Z(\GL_{2,\beta^\vee}).
    \]
    This time $t\in {^L\!T}$ is characterized by
    \[
    \beta^\vee(t)=1,~\det(t)=(2\alpha^\vee+3\beta^\vee)(t)=q^2.
    \]
    Switching to the $\SL_3$ generated by the long root subgroups, this means
    \[
    \alpha^\vee(t)=q,~(\alpha^\vee+3\beta^\vee)(t)=q,~(2\alpha^\vee+3\beta^\vee)(t)=q^2.
    \]
    Now $\varphi_\pi:\SL_2\rightarrow\SO_3\subset \SL_3$ is the three dimensional irreducible representation, so the above characterizes exactly the semi-simple class of 
    \[
    \varphi_{\pi}(\begin{bmatrix}
        q^{1/2} &\\
        & q^{-1/2}
    \end{bmatrix})\in \SL_3.
    \]
\end{itemize}

For the structure constants we have the following table:

\begin{center}
    \begin{tabular}{|c||c|c|c|c|c| c|}
    \hline
     $\bG$ & $\gamma$ & $\bM_{\gamma}$ & $m$ & $\chi\in X^*(\bM_{\gamma})^{\bG}$ generator & $\gamma^\vee$ & $\frac{m}{\langle \chi,\gamma^\vee\rangle}$\\

\hline
    
        $\rG_2$ & long $\beta$ & $\GL_2$ & $2$ & $\det=(3\alpha+2\beta)=\Tilde{\beta}$ & short $\beta^\vee$ & $2$ \\

          $\rG_2$ & short $\alpha$ & $\GL_2$ & $2$ & $\det=(2\alpha+\beta)=\Tilde{\alpha}$ & long $\alpha^\vee$ & $2$ \\
        \hline
       
    \end{tabular}
    \end{center}
    Since $\rG_2$ is semisimple and adjoint, $\cS_{\varphi_\pi}^\natural=\pi_0(\overline{S}_{\varphi_\pi})$, while for $\bM_\gamma\simeq \GL_2$ we have $\cS_{\varphi_\tau}^\natural=2$, so that the result of our main theorem reduces to 
    \[
    \frac{|\cS_{\varphi_\pi}^\natural|}{|\cS_{\varphi_\sigma}^\natural|}=j\cdot \frac{\langle \chi,\alpha^\vee\rangle}{m}\Longleftrightarrow |\pi_0(\overline{S}_{\varphi_\pi})|=j,
    \]
    which is clearly compatible with the above three classes of L-parameters.

The compatibility of Langlands-Shahidi $\gamma$-factors and those from LLC for split $\rG_2$ has been verified in \cite{shahidi-g2}. 
Of course, the formal degree conjecture is known for $\mathbf{M}_{\alpha},\mathbf{M}_{\beta}\isom \GL_2$; thus as an application we have verified the formal degree conjecture for discrete series of $\mathrm{G}_2$ supported on maximal Levi subgroups.

\subsection*{Acknowledgements} The author would like to first thank his advisor A. Ichino for suggesting this problem and many helpful discussions. He thanks professor W. T. Gan for pointing out a misunderstanding of the structure of L-parameters in \cite{gan-savin-g2theta}, and professor A.-M. Aubert and V. Heiermann for correcting some imprecision in \S\ref{subsection construction of discrete parameters} related to the construction in \cite{heiermann-orbites}.

The author also sincerely thanks professor A.-M. Aubert for her careful reading of the draft and bringing related sections of \cite{haines}, \cite{aubert-xu} to him.

\subsection*{Data Availability Statement}
The author confirms that all data generated or analyzed in this study are available within the article.

\printbibliography[heading=bibintoc,] 

\end{document}